\documentclass[a4paper,11pt,titlepage,leqno,twoside]{article} 

\setlength{\headheight}{24pt}
\usepackage[top=1.5in,bottom=2in,left=1.3in,right=1.3in,marginparwidth=1in]{geometry} 

\usepackage{mathrsfs}
\usepackage{amssymb}
\usepackage{amsmath}
\usepackage{amsfonts}
\usepackage{longtable}
\usepackage{paralist}
\usepackage[mathlines]{lineno}

\newtheorem{theorem}{Theorem}[section]
\newtheorem{Claim}[theorem]{Claim}
\newtheorem{definition}[theorem]{Definition}

\newtheorem{Fact}[theorem]{Fact}
\newtheorem{lemma}[theorem]{Lemma}

\newtheorem{remark}[theorem]{Remark}

\newenvironment{proof}
{\noindent \textsc{Proof.}}
{\hspace*{\fill}$\Box$\bigskip}

\newcommand{\dom}[1]{\ensuremath{\mathrm{dom}}(#1)}

\newcommand{\set}[2]{\ensuremath{\{#1 \,|\, #2 \}}}
\newcommand{\seq}[2]{\ensuremath{\langle #1 \,|\, #2 \rangle}}

\newcommand{\restr}[2]{\ensuremath{#1 \! \upharpoonright \! #2}}

\newcommand{\el}{\prec}

\newcommand{\sub}{\subseteq}

\newcommand{\bb}{\mathbb}

\newcommand{\beq}{\begin{equation}}
\newcommand{\eeq}{\end{equation}}
\newcommand{\brm}{\begin{remark}\begin{rm}}
\newcommand{\erm}{\end{rm}\end{remark}}
\newcommand{\mx}{\mathrm}
\newcommand{\bce}{\begin{compactenum}}
\newcommand{\ece}{\end{compactenum}}

\newcommand{\Prk}{\mathrm{Prk}}

\newcommand{\Add}{\mathrm{Add}}

\newcommand{\R}{\bb{R}}
\newcommand{\Q}{\bb{Q}}
\renewcommand{\P}{\bb{P}}
\newcommand{\TP}{{\sf TP}}

\newcommand{\M}{\bb{M}}
\newcommand{\x}{\times}
\newcommand{\A}{\mathscr{A}}

\newcommand{\GCH}{\sf GCH}

\newcommand{\MA}{\sf MA}

\newcommand{\no}{\noindent}

\newcommand{\uu}{\mathfrak{u}(\kappa)}

\newcommand{\cf}{\mathrm{cf}}

\newcommand{\D}{\bb{D}}

\newcommand{\Anti}{\mathrm{MaxAnti}}

\newcommand{\comp}{\mathrel{||}}

\makeatletter
\renewcommand\section{\@startsection {section}{1}{\z@}%
                                   {-3.5ex \@plus -1ex \@minus -.2ex}%
                                   {2.3ex \@plus.2ex}%
                                   {\noindent\center\textsc}}
\renewcommand\subsection{\@startsection{subsection}{2}{\z@}%
                                     {-3.25ex\@plus -1ex \@minus -.2ex}%
                                     {1.5ex \@plus .2ex}%
                                     {\noindent\center\textsc}}
\renewcommand\subsubsection{\@startsection{subsubsection}{3}{\z@}%
                                     {-3.25ex\@plus -1ex \@minus -.2ex}%
                                     {1.5ex \@plus .2ex}%
                                     {\noindent\center \textsc}}
\makeatother



\usepackage[pdftex]{hyperref}
\hypersetup{
    colorlinks=true, 
    linktoc=all,     
    linkcolor=blue,  
    allcolors=blue,
}

\usepackage{fancyhdr}
\fancyhf{}
\fancyhead[LE,RO]{\footnotesize \thepage}
\fancyhead[LO]{\center\footnotesize Indestructibility of the tree property}
\fancyhead[RE]{\center \footnotesize R.\ Honzik, {\v S}.\ Stejskalov{\'a}}



\pagestyle{fancy}

\begin{document}\thispagestyle{empty}

\begin{center}
\no {\large \MakeUppercase{Indestructibility of the tree property}\par}

\bigskip

\medskip

Radek Honzik, {\v S}{\'a}rka Stejskalov{\'a}

\medskip

\begin{footnotesize}
Charles University, Department of Logic,\\
Celetn{\' a} 20, Praha 1, 
116 42, Czech Republic\\
radek.honzik@ff.cuni.cz\\

\medskip

Version: \today
\end{footnotesize}
\end{center}

\medskip

\begin{quote}
{\bf Abstract.} In the first part of the paper, we show that if $\omega \le \kappa < \lambda$ are cardinals, $\kappa^{<\kappa} = \kappa$, and $\lambda$ is weakly compact, then in $V[\M(\kappa,\lambda)]$ the tree property at $\lambda = \kappa^{++V[\M(\kappa,\lambda)]}$ is indestructible under all $\kappa^+$-cc forcing notions which live in $V[\Add(\kappa,\lambda)]$, where $\Add(\kappa,\lambda)$ is the Cohen forcing for adding $\lambda$-many subsets of $\kappa$ and $\M(\kappa,\lambda)$ is the standard Mitchell forcing for obtaining the tree property at $\lambda = (\kappa^{++})^{V[\M(\kappa,\lambda)]}$. This result has direct applications to Prikry-type forcing notions and generalized cardinal invariants. In the second part, we assume that $\lambda$ is supercompact and generalize the construction and obtain a model $V^*$, a generic extension of $V$, in which the tree property at $(\kappa^{++})^{V^*}$ is indestructible under all $\kappa^+$-cc forcing notions living in $V[\Add(\kappa,\lambda)]$, and in addition by all forcing notions living in $V^*$ which are $\kappa^+$-closed and ``liftable'' in a prescribed sense (such as $\kappa^{++}$-directed closed forcings or well-met forcings which are $\kappa^{++}$-closed with the greatest lower bounds). 
\end{quote}

\section{Introduction}

Let $\lambda$ be an uncountable regular cardinal. We say that the \emph{tree property} holds at $\lambda$, and write $\TP(\lambda)$, if every $\lambda$-tree has a cofinal branch (equivalently, there are no $\lambda$-Aronszajn trees). Recently, there has been extensive research which studies the extent of the tree property at multiple successor cardinals (see for instance Neeman \cite{NEEMAN:tp} which contains a detailed bibliography on the subject), with the ultimate goal of checking whether it is consistent that the tree property holds at every regular cardinal greater than $\aleph_1$. Starting with a large cardinal $\lambda$, the method of proof is typically based on lifting an elementary embedding with critical point $\lambda$ through a forcing which collapses $\lambda$ to a successor cardinal, and on  application of criteria for a forcing notion adding or not adding new cofinal branches to \emph{existing} $\lambda$-trees (see Section \ref{sec:A} for examples of such criteria). With such criteria in place, one can argue that all $\lambda$-trees have cofinal branches in the final generic extension, and thus $\TP(\lambda)$ holds.

In this paper we study a related question and search for criteria for forcing notions adding or not adding \emph{new} $\lambda$-Aronszajn trees. Notice that if $\Q$ does not add new $\lambda$-Aronszajn trees over a model $V^*$ with $\TP(\lambda)$, then $\TP(\lambda)$ still holds in $V^*[\Q]$. We say that $\TP(\lambda)$ in $V^*$ is \emph{indestructible under $\Q$}. It is clear that identifying forcings which cannot add $\lambda$-Aronszajn trees (over some models) can be very helpful in constructing complex models with $\TP(\lambda$).

To give our paper more specific focus, we will work over the Mitchell model $V[\M(\kappa,\lambda)]$ (and its variants) in which $\lambda$ equals $\kappa^{++}$. Our main result is that if $\kappa = \kappa^{<\kappa}$ and $\lambda >\kappa$ is weakly compact, the tree property at $\lambda = \kappa^{++}$ in $V[\M(\kappa,\lambda)]$ is indestructible under all $\kappa^+$-cc forcing notions which live in an intermediate Cohen submodel $V[\Add(\kappa,\lambda)]$ of $V[\M(\kappa,\lambda)]$ (Theorem \ref{th:improve}). It is open whether the restriction of living in $V[\Add(\kappa,\lambda)]$ can be removed; but even with this restriction, the result is quite strong and applies to many Prikry-style forcing notions such as the vanilla Prikry forcing or Magidor forcing (see Section \ref{sec:app}) and to forcings which manipulate the generalized cardinal invariants (see Section \ref{sec:var}).

In Theorem \ref{th:main}, we assume that $\lambda$ is supercompact and integrate the method in Theorem \ref{th:improve} with a method of guessing all potential $\kappa^+$-closed and $\kappa^{++}$-liftable forcings (see Definition \ref{def:master}), obtaining a forcing $\R$ such that in $V[\R]$ the tree property at $\kappa^{++}$ is indestructible under all $\kappa^+$-cc forcings living in $V[\Add(\kappa,\lambda)]$, and also by all $\kappa^+$-closed, $\kappa^{++}$-liftable forcings living in $V[\R]$ such as $\kappa^{++}$-directed closed forcings or well-met $\kappa^{++}$-closed forcing with greatest lower bounds (see Definition \ref{def:wellmet}).

Let us conclude by a short discussion of how Theorem \ref{th:main} relates to existing results. The indestructibility for small $\kappa^+$-cc forcings of size $\kappa^+$ appeared in Unger \cite{UNGER:1} (with $\kappa = \omega$). A different model with indestructibility for $\kappa^{++}$-directed closed forcings is described in \cite{UNGER:1}, where $V^*$ is now constructed using a Laver function for a supercompact $\lambda$ to guess all $\kappa^{++}$-directed closed forcings (this idea goes back to Cummings and Foreman \cite{CUMFOR:tp}). We merged these results together into one model (which is a routine task), but importantly added all $\kappa^+$-cc forcings which  live in $V[\Add(\kappa,\lambda)]$.\footnote{Unger \cite{UNGER:1} allows $\Add(\kappa,\gamma)$, $\gamma$ arbitrary, as the single example of a $\kappa^+$-cc forcing of size larger than $\kappa^+$ which preserves $\TP(\kappa^{++})$ over $V[\M]$ (notice $\Add(\kappa,\gamma)$ lives already in $V$ and is $\kappa^+$-Knaster in the model in \cite{UNGER:1}).}

\subsection{Forcings which add $\kappa^{++}$-Aronszajn trees}

Let us give some examples of forcings which do add $\kappa^{++}$-Aronszajn trees to put into context the results in Theorem \ref{th:main} (see Section \ref{sec:prelim:M} for definitions of the forcing notions).

\bigskip

\noindent{\bf Example 1.}  Suppose $\omega \le \kappa < \lambda$ are cardinals, $\kappa^{<\kappa} = \kappa$, and $\lambda$ is weakly compact. It is known that the Mitchell forcing $\M = \M(\kappa,\lambda)$ forces $\TP(\lambda)$ while turning $\lambda$ into $\kappa^{++}$ of $V[\M]$. There is a projection onto $\M$ from the product $R^0 \x R^1$ where $R^0$ is equal to the Cohen forcing $\Add(\kappa,\lambda)$ and $R^1$ a $\kappa^+$-closed forcing (the ``term'' forcing). Since $R^0$ does not collapse cardinals, $R^1$ must do the collapsing and therefore $(\kappa^{++})^{V[R^1]} = \lambda$. As $2^\kappa = \kappa^+$ holds in $V[R^1]$, there is in $V[R^1]$ a special $(\kappa^{++})^{V[R^1]}$-Aronszajn tree $T$. The $\kappa^+$-Knaster forcing $R^0$ in $V[R^1]$ cannot add a cofinal branch to $T$, and therefore $T$ is a special $(\kappa^{++})^{V[R^1 \x R^0]}$-Aronszajn tree in $V[R^0 \x R^1]$. An analysis of $\M$ and $R^0 \x R^1$ shows that $R^0 \x R^1 \mbox{ is equivalent to } \M * \dot{\D}$, where $\dot{\D}$ is forced by $\M$ to be $\kappa$-closed, $\kappa^+$-distributive and $\kappa^{++}$-cc  (see \cite{ABR:tree}). By the discussion above, $\dot{\D}$ adds a (special) $\kappa^{++}$-Aronszajn tree.

\brm
The standard forcing notion for adding $\square_{\kappa^+}$ with conditions of size at most $\kappa$ has the same properties as $\dot{\D}$ over $V[\M]$; see \cite{UNGER:1} for more details regarding this forcing.
\erm

\noindent {\bf Example 2.} Let us work over $L$ and let $\lambda$ be weakly compact and $\kappa < \lambda$ regular. It is known that adding a single Cohen subset of $\lambda$ by $\Add(\lambda,1)$ destroys weak compactness of $\lambda$ while preserving its Mahloness. Since for an inaccessible $\lambda$, the existence of a $\lambda$-Aronszajn tree is equivalent to $\lambda$ not being weakly compact, it follows that $\Add(\lambda,1)$ adds a $\lambda$-Aronszajn tree $T$. Now consider the forcing $\M \x \Add(\lambda,1)$, where $\M = \M(\kappa,\lambda)$. We just argued that there is a $\lambda$-Aronszajn tree $T$ in $L[\Add(\lambda,1)]$. Since $\M$ is $\lambda$-Knaster in $L[\Add(\lambda,1)]$, it cannot add a cofinal branch through $T$, and therefore $T$ is a $\lambda$-Aronszajn tree in $L[\Add(\lambda,1) \x \M] = L[\M \x \Add(\lambda,1)]$ (note that $\lambda = (\kappa^{++})^{L[\M \x \Add(\lambda,1)]}$). It follows that $\Add(\lambda,1)$ is a $\kappa$-closed, $\kappa^{++}$-distributive forcing of size $\kappa^{++}$ in $L[\M]$ which adds a $\kappa^{++}$-Aronszajn tree $T$ (notice that since $\lambda$ is still Mahlo in $L[\Add(\lambda,1)]$, the model $L[\Add(\lambda,1)][\M]$ has no special $\lambda$-Aronszajn trees, and therefore $T$ must be non-special). 

\bigskip

\noindent
{\bf Example 3.} Let $P$ be the generalization to $\aleph_2$ of Jech's forcing for adding an $\aleph_1$-Souslin tree (see also Kunen \cite{KUNENsat} for details of this generalization) and let us consider this forcing over any model in which $2^{\aleph_1} = \aleph_2$ holds. $P$ is an $\aleph_1$-closed, $\aleph_2$-strategically closed forcing of size $\aleph_2$ which adds an $\aleph_2$-Aronszajn tree. If we generalize Jech's forcing to any $\kappa^{++} \ge \aleph_2$ with $2^{\kappa^+} = \kappa^{++}$ we get there is an $\aleph_1$-closed, $\kappa^{++}$-strategically closed forcing of size $\kappa^{++}$ which adds a $\kappa^{++}$-Aronszajn tree.

\brm Note that if we also consider forcings which collapse cardinals then any forcing which collapses $2^\kappa$ to $\kappa^+$ adds a special $\kappa^{++}$-Aronszajn tree.
\erm

\section{Preliminaries}\label{sec:prelim}

\subsection{Mitchell forcing and its variants}\label{sec:prelim:M}

In Section \ref{sec:Mitchell}, we shall use the standard Mitchell forcing which we now review for the benefit of the reader. Let $\omega \le \kappa < \lambda$ be cardinals, $\kappa^{<\kappa} = \kappa$ and $\lambda$ inaccessible. We define the standard Mitchell forcing $\M(\kappa,\lambda)$ as follows. Let $\P$ be the Cohen forcing $\Add(\kappa,\lambda)$, where we identify conditions in $\Add(\kappa,\lambda)$ with partial functions of size $<\kappa$ from $\kappa \x \lambda$ into $2$. For $\alpha < \lambda$, we write $\P_\alpha$ to denote the restriction of $\P$ to coordinates below $\alpha$ (we write $\restr{p}{\alpha}$ for the restriction of $p \in \P$ to $\P_\alpha$).

\begin{definition}\label{def:Morg}
Conditions in $\M(\kappa,\lambda)$ are pairs $(p^0,p^1)$ such that $p^0 \in \P$ and $p^1$ is a function with domain $\dom{p^1} \sub \lambda$ of size at most $\kappa$. For $\alpha$ in the domain of $p^1$, $p^1(\alpha)$ is a $\P_\alpha$-name and \beq 1_{\P_\alpha} \Vdash p^1(\alpha) \in \Add(\kappa^+,1)^{V[\P_\alpha]}.\eeq The ordering is defined as follows: $(p^0,p^1) \le (q^0,q^1)$ iff $p^0 \le q^0$ in $\P$ and the domain of $p^1$ extends the domain of $q^1$, and for every $\alpha \in \dom{q^1}$, \beq \restr{p^0}{\alpha} \Vdash_{\P_\alpha} p^1(\alpha) \le q^1(\alpha).\eeq
\end{definition}

If $\kappa,\lambda$ are understood from the context, we write just $\M$. For $\alpha < \lambda$, let us denote by $\M_\alpha$ the natural truncation of $\M$ to $\alpha$ (we write $\restr{(p^0,p^1)}{\alpha}$ for the restriction of $(p^0,p^1)$ to $\M_\alpha$).

Using the Abraham's analysis (see \cite{ABR:tree}), there is a projection onto $\M$ from the product $R^0 \x R^1$ where $R^0 = \P$ is $\kappa^+$-Knaster (under the assumption $\kappa^{<\kappa} = \kappa$) and $R^1$ is $\kappa^+$-closed (the ``term'' forcing).  This analysis also holds for the natural quotients of $\M$ and $R^0$ and $R^1$: in particular,
\beq\label{eq:m}\mbox{if $\alpha<\lambda$ is inaccessible, then there is a projection onto $\M/\M_{\alpha}$ from $R^0_{\alpha} \x R^1_{\alpha}$,}\eeq where, under relevant assumptions, the forcing $R^0_{\alpha}$ is $\kappa^+$-Knaster and $R^1_{\alpha}$ is $\kappa^+$-closed in $V[\M/\M_{\alpha}]$.

In Section \ref{sec:more}, we will use a modification of $\M$, which we denote $\M'$, and define a forcing $\R$ closely following Abraham \cite{ABR:tree}. Let us first define $\M'$:
\medskip{}
\begin{definition} \label{def:Mmod}
Let $\kappa<\lambda$ be as in Definition \ref{def:Morg}. The forcing $\M' = \M'(\kappa,\lambda)$ is defined exactly as $\M(\kappa,\lambda)$ with the following modifications:
\bce[(i)]
\item The domain of the functions in $\Add(\kappa,\lambda)$ is composed only of successor ordinals, i.e. $p^0 \in \Add(\kappa,\lambda)$ iff $p^0$ is a partial function from $\kappa \x \mx{SuccOrd}(\lambda)$ into $2$ of size $<\kappa$, where $\mx{SuccOrd}(\lambda)$ denotes the successor ordinals below $\lambda$. We denote this forcing $\P'$ and $\P'_\alpha$ for its restrictions, $\alpha<\lambda$.
\item The domain of $p^1$ in Definition \ref{def:Morg} is composed only of successor cardinals below $\lambda$.
\ece
\end{definition}

It is easy to check that this change has no material effect on the basic properties of $\M$ such as the product analysis by means of $R^0 \x R^1$ (where $R^0$ is now $\P'$): it is a technical device which enables easier factoring of $\R$ at limit cardinals.\footnote{The change of the domain of $p^1$ \emph{does} have an effect on the properties of $\M$ with respect to forcing the \emph{approachability property} or its negation (see \cite{8fold}). However, it has no effect for the tree property argument.}

Technically speaking, the definition of $\R = \R(\kappa,\lambda)$ is by defining forcings $\R_\alpha$ by induction on $\alpha < \lambda$ using the truncations $\M'_\alpha$, with $\R$ being equal to $\R_\lambda$ (we will write $\restr{(p^0,p^1,p^2)}{\alpha}$ to denote the restriction of $(p^0,p^1,p^2)$ to $\R_\alpha$). We will state the definition without making the induction explicit in the interest of brevity. 

Let $f^L: \lambda \to H(\lambda)$ be a Laver function for a supercompact cardinal $\lambda$. Let us first define a certain set $\A$ of inaccessible cardinals below $\lambda$. An inaccessible cardinal $\alpha < \lambda$ is in $\A$ if and only if $f^L(\alpha)$ is an $\R_\alpha$-name $\dot{Q}$ for a forcing notion and \beq 1_{\R_\alpha} \Vdash \dot{Q}\mbox{ is $\kappa^{+}$-closed}.\eeq

\begin{definition}\label{def:R}
Conditions in $\R = \R(\kappa,\lambda)$ are triples $(p^0,p^1,p^2)$ such that $(p^0,p^1)$ is in $\M'(\kappa,\lambda)$ and $p^2$ is a function with domain $\dom{p^2}$ of size at most $\kappa$ such that every $\alpha \in \dom{p^2}$ is an element of $\A$. For such an $\alpha$, $p^2(\alpha)$ is an $\R_\alpha$-name and \beq 1_{\R_\alpha} \Vdash p^2(\alpha) \in \dot{Q},\eeq where $\dot{Q}$ equals $f^L(\alpha)$. The ordering is defined as follows: $(p^0,p^1,p^2) \le (q^0,q^1,q^2)$ iff $(p^0,p^1) \le (q^0,q^1)$ in $\M'(\kappa,\lambda)$, the domain of $p^2$ extends the domain of $q^2$ and for every $\alpha \in \dom{q^2}$, \beq \restr{(p^0,p^1,p^2)}{\alpha} \Vdash_{\R_\alpha} p^2(\alpha) \le q^2(\alpha).\eeq
\end{definition}

\begin{Fact}\label{lm:Laver-cc}
$\R$ is $\lambda$-cc.
\end{Fact}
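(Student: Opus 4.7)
The approach is a standard $\Delta$-system argument, analogous to the one used for the $\lambda$-cc of the Mitchell forcing, with additional bookkeeping for the third coordinate $p^2$ whose values are $\R_\alpha$-names.

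The first step is to show by induction on $\alpha<\lambda$ that $|\R_\alpha|<\lambda$, after passing without loss of generality to the dense subforcing consisting of conditions in which every $p^1(\alpha)$ is a nice $\P'_\alpha$-name and every $p^2(\alpha)$ is a nice $\R_\alpha$-name. A condition in $\R_\alpha$ is a triple with $p^0\in\P'_\alpha$ of size $<\kappa$ and $p^1,p^2$ functions of size $\le\kappa$ taking values that are nice names in strictly smaller forcings. Given the inductive bound $|\R_\beta|<\lambda$ for all $\beta<\alpha$, regularity of $\lambda$ yields $\mu:=\sup_{\beta<\alpha}|\R_\beta|<\lambda$, and then inaccessibility of $\lambda$ gives $|\R_\alpha|<\lambda$ by routine counting of triples, nice names, and supports.

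Assuming for contradiction that $\{r_\xi=(p^0_\xi,p^1_\xi,p^2_\xi) : \xi<\lambda\}$ is an antichain in $\R$, for each $\xi$ let $s_\xi\sub\lambda$ be the union of the projection to the $\lambda$-coordinate of $\dom{p^0_\xi}$ with $\dom{p^1_\xi}\cup\dom{p^2_\xi}$, so $|s_\xi|\le\kappa$. Since $\lambda$ is inaccessible, the $\Delta$-system lemma produces a subfamily of size $\lambda$ whose supports form a $\Delta$-system with root $s$, and $s$ is bounded by some $\beta<\lambda$. By the first step, the restrictions $\restr{r_\xi}{s}$ range over fewer than $\lambda$ values, so I further refine to a $\lambda$-sized subfamily with identical restriction to $s$. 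For any two $r_\xi,r_\eta$ in this subfamily, I would form their coordinatewise union, which is well defined because they agree on $s$ and have disjoint supports off $s$; using Definition~\ref{def:R}, the union is a condition in $\R$ extending both, because each ordering requirement at a coordinate outside $s$ is inherited directly from $r_\xi$ or $r_\eta$, contradicting the antichain assumption.

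The main obstacle is the size bound $|\R_\alpha|<\lambda$, since forcing names are a priori unbounded in size; the standard remedy is to work throughout with nice names, which is legitimate for chain-condition arguments and keeps the counting under control. A secondary technical point is checking coordinate by coordinate that the union is a legitimate condition in $\R$, which uses an inner induction on the coordinate to invoke the previously established properties of the $\R_\alpha$'s, in particular the density of nice-name conditions and the fact that $\restr{(r_\xi)}{\alpha}=\restr{(r_\eta)}{\alpha}$ holds for coordinates below $\min s_\xi\setminus s$.
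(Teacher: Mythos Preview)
Your $\Delta$-system argument is correct, but it differs from the paper's route. The paper does not unfold a direct argument; it simply invokes the standard black-box theorem (as in Jech's book) that an iteration of length a Mahlo cardinal $\lambda$, taking direct limits on a stationary set and with all iterands in $H(\lambda)$, is $\lambda$-cc. In particular the paper explicitly appeals to Mahloness of $\lambda$, whereas your argument uses only strong inaccessibility: once you know $|\R_\alpha|<\lambda$ for all $\alpha<\lambda$, the generalized $\Delta$-system lemma and a counting of restrictions to the root finish the job. This is legitimate here because the support of every condition has size $\le\kappa$, so $\R$ takes direct limits at every stage of cofinality $>\kappa$ and the extra strength of Mahlo that the cited theorem would need for Easton-type supports is not actually required.

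Two small points worth tightening. First, your phrase ``by the first step, the restrictions $\restr{r_\xi}{s}$ range over fewer than $\lambda$ values'' conflates restriction to the ordinal $\sup s$ (which is what the first step bounds) with restriction to the set $s$ of coordinates; the counting still goes through, but it is a separate short computation using inaccessibility, not literally the bound $|\R_\beta|<\lambda$. Second, verifying that the coordinatewise union $r$ extends both $r_\xi$ and $r_\eta$ does not need an ``inner induction on the coordinate'': at each $\alpha$ in $\dom{p^2_\xi}$ the requirement is just $\restr{r}{\alpha}\Vdash r^2(\alpha)\le p^2_\xi(\alpha)$, which is trivial since $r^2(\alpha)=p^2_\xi(\alpha)$, and similarly for $p^1$. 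So the verification is a one-line check rather than an induction.
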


\begin{proof}
This is a standard argument using the fact that all forcings guessed by $f^L$ are elements of $H(\lambda)$ (and are therefore $\lambda$-cc) and $\lambda$ is a Mahlo cardinal, and hence the direct limits of $\R$ form a stationary set (see for more details \cite{JECHbook}).
\end{proof}

The forcing $\R$ shares some important properties with $\M$ and $\M'$, in particular there are forcings $R^0$ and $R^1$, with $R^0 = \P'$ being $\kappa^+$-Knaster (if $\kappa^{<\kappa} = \kappa$) and $R^1$ being $\kappa^+$-closed (the ``term'' forcing), such that there is a projection onto $\R$ from the product $R^0 \x R^1$. This analysis also holds for the natural quotients of $\R$ and $R^0$ and $R^1$: 
\beq\label{eq:q}\mbox{if $\alpha$ is in $\A$, then there is a projection onto $\R/\R_{\alpha+1}$ from $R^0_{\alpha+1} \x R^1_{\alpha+1}$,}\eeq where (under relevant assumptions) $R^0_{\alpha+1}$ is $\kappa^+$-Knaster and $R^1_{\alpha+1}$ is $\kappa^+$-closed in $V[\R/\R_{\alpha+1}]$. For completeness, let us define $R^1$: it consists of all conditions in $\R$ of the form $(\emptyset,p^1,p^2)$ (notice that $\emptyset$ is the weakest condition in $\P'$). The proofs of these properties are exactly as in \cite{ABR:tree}, Lemma 2.15 and 2.18.\footnote{The only difference between our $\R$ and the forcing of Abraham is that our forcing on the third coordinate of $\R$, with conditions written as $p^2$, is $\kappa^+$-closed -- which is sufficient for the present argument -- whereas Abraham considers forcings which are $\kappa^{++}$-directed closed, in preparation for his lifting argument.}

\brm
We can simplify the definition of $\R$ and obtain a forcing notion $\R^*$ which achieves the same results as $\R$ in Theorem \ref{th:main}: the conditions in $\R^*$ are pairs $(p^0,p^2)$ where $p^0$ is in $\P'$ and $p^2$ is defined as $p^2$ in $\R$ (with the obvious modification that $p^2(\alpha)$ is an $\R^*_\alpha$-name for $\alpha \in \A$). The ordering is as in $\R$: $(p^0,p^2) \le (q^0,q^2)$ iff $p^0 \le q^0$ in $\P'(\kappa,\lambda)$, the domain of $p^2$ extends the domain of $q^2$ and for every $\alpha \in \dom{q^2}$, \beq \restr{(p^0,p^2)}{\alpha} \Vdash_{\R^*_\alpha} p^2(\alpha) \le q^2(\alpha).\eeq The point is that unboundedly often $f^L(\alpha)$ will choose the Cohen forcing $\Add(\kappa^+,1)$ of $V[\R^*_\alpha]$, obviating the need for the extra coordinate $p^1$ (it follows that the tree property at $\lambda$ will hold by the same argument as for $\R$). We use the presentation with $\R$ to use the familiar setup of Abraham's paper \cite{ABR:tree}.
\erm
{~}
\subsection{Regular embeddings from elementary embeddings} \label{sec:lift}

Recall the following standard fact (see for instance \cite{Kunen:new}):

\begin{Fact}\label{fact:q}
Assume $P,Q$ are forcing notions, $G$ is a $P$-generic filter, and $i: P \to Q$ is a regular (also called ``complete'') embedding. Then $Q$ is equivalent to $P * Q/\dot{G}$, where $Q/\dot{G}$ is a $P$-name for a forcing notion with conditions \beq \set{q \in Q}{q \mbox{ is compatible with }i''G},\eeq with the ordering inherited of $Q$.\footnote{``$q$ is compatible with $i''G$'' is short for ``$(\forall p \in G)\; q$ is compatible with $i(p)$''.} We write $Q/G$ for the interpretation of $Q/\dot{G}$ in $V[G]$ and call $Q/G$ \emph{the quotient of $Q$ over $G$}.
\end{Fact}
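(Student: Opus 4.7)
The plan is to establish the equivalence by exhibiting a bijective correspondence between $Q$-generic filters over $V$ and pairs $(G,H)$ where $G$ is $P$-generic over $V$ and $H$ is $(Q/\dot{G})_G$-generic over $V[G]$, and then to invoke the two-step iteration theorem.

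First, given a $V$-generic filter $K \sub Q$, I would set $G = i^{-1}(K) = \set{p \in P}{i(p) \in K}$. That $G$ is a filter is immediate from $i$ preserving order and incompatibility; genericity follows since $i$ is regular, so for every maximal antichain $A \sub P$ the image $i''A$ is a maximal antichain of $Q$, and $K$ meets $i''A$. Next I would check that $K \sub (Q/\dot{G})_G$: any $q \in K$ is compatible in $Q$ with every $i(p)$ for $p \in G$, because both lie in the filter $K$. Thus $K$ itself plays the role of $H$, and what remains is to verify its genericity over $V[G]$.

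Second, I would verify that $K$ meets every dense set $D \in V[G]$ of $(Q/\dot{G})_G$. Let $\dot D$ be a $P$-name for $D$, and fix $p_0 \in G$ forcing $\dot D$ to be dense in $Q/\dot{G}$. In $V$, consider the set
\beq
E = \set{q \in Q}{\exists p \le p_0\ \mbox{in}\ P\ \mbox{s.t.}\ i(p) \le q\ \mbox{and}\ p \Vdash q \in \dot D}.
\eeq
A standard density argument — using that $i$ is regular and that for each $q$ compatible with $i(p_0)$ there is $p \le p_0$ with $p \Vdash q \in Q/\dot{G}$, and then refining $q$ inside $\dot D$ — shows that $E$ is predense below $i(p_0)$ in $Q$. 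Since $i(p_0) \in K$ and $K$ is $V$-generic, $K$ meets $E$, and any witness lies in $D$.

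Third, for the converse, suppose $G$ is $P$-generic over $V$ and $H$ is $(Q/\dot{G})_G$-generic over $V[G]$. Let $K$ be the upward closure of $H$ in $Q$. To see $K$ is $V$-generic it suffices to show that for any maximal antichain $A \in V$ of $Q$, the set $A \cap (Q/\dot{G})_G$ is predense in $(Q/\dot{G})_G$ in $V[G]$; this is a routine consequence of $i$ being regular (every $q \in Q/G$ is compatible with some element of $A$ because $A$ is maximal in $Q$, and that compatibility witness can be refined to live in $Q/G$). Then $H$ meets $A$, so does $K$.

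The main obstacle — and it is a mild one — is the predensity verification in the previous paragraph together with the density of $E$ in the second step: both require the careful interplay between $i''G$ and the ambient forcing $Q$ that is encoded in the definition of the quotient. Once these are in hand, the correspondence $K \leftrightarrow (G,H)$ is bijective and order-preserving on the respective Boolean completions, and the usual two-step forcing theorem identifies $Q$ with $P * (Q/\dot{G})$ as required.
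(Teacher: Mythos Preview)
The paper does not prove this statement: it is introduced with ``Recall the following standard fact (see for instance \cite{Kunen:new})'' and given no argument at all. Your proposal is the standard proof of this classical fact, so there is nothing to compare against.

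Your outline is correct. The two verifications you flag as ``routine'' are indeed the heart of the matter and are slightly underspecified: in the converse direction, for instance, showing that $A \cap Q/G$ is predense in $Q/G$ needs one more step than you wrote, since a common extension in $Q$ of $q \in Q/G$ and $a \in A$ need not itself lie in $Q/G$; one typically argues via reductions (for each $q \in Q$ there is $p \in P$ all of whose extensions have images compatible with $q$) or by a density argument in $P$ over the name $Q/\dot{G}$. But these really are routine once noticed, and your overall architecture --- build $G$ from $K$, check $K$ is generic for the quotient, and conversely build $K$ from $(G,H)$ --- is exactly the textbook argument.
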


We will summarize several observations which allow us to obtain regular embeddings from elementary embeddings. We start by relativizing the notion of a regular embedding to a pair of models of set theory. Let $M$ be a transitive model of set theory and $P \in M$ a forcing notion; we define $\Anti(P)^M$ as the set of all maximal antichains of $P$ which are elements of $M$.

\begin{definition}
Let $M$ and $N$ be two transitive models of set theory and $P \in M$ and $Q \in N$ partial orders. We say that $i: P \to Q$ is an \emph{$(M,N)$-regular embedding} if $i$ preserves the ordering and incompatibility and moreover for every $A \in \Anti(P)^M$, $i''A \in \Anti(Q)^N$.
\end{definition}

It is clear from the definition that if $i$ is an $(M,N)$-regular embedding, then whenever $G^*$ is $Q$-generic over $N$, then $G = i^{-1}{''}G^*$ is $P$-generic over $M$.

We will make use of the following fact:

\begin{Fact} \label{fact:regular}
Assume $j: M \to N$ is an elementary embedding with critical point $\lambda$ between a pair of transitive models of set theory and let $P \in M$ be a partial order such that $M \models$ ``$P$ is $\lambda$-cc''.
\bce[(i)]
\item \label{reg:i}The restriction $\restr{j}{P}: P \to j(P)$ is an $(M,N)$-regular embedding. In particular, if $G^*$ is $j(P)$-generic over $N$ and $G = j^{-1}{''}G^*$, then $j$ lifts to \beq \label{G-star} j: M[G] \to N[G^*].\eeq
\item \label{eq:ii} Moreover, if \beq \label{eq:inverse} \restr{j}{P} \in N \mbox{ and }\Anti(P)^N \sub \Anti(P)^M,\eeq then \begin{multline} \label{inside} N \models \restr{j}{P} \mbox{ is a regular embedding from $P$ into $j(P)$ and }\\ j(P) \mbox{ is equivalent to } P * j(P)/\dot{G}.\end{multline}
\ece
\end{Fact}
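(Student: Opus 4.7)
For part (i), I plan to verify the three clauses of $(M,N)$-regularity separately. Order preservation and two-sided incompatibility preservation are immediate from the elementarity of $j$, so the substantive clause concerns maximal antichains. Given $A \in \Anti(P)^M$, the $\lambda$-cc hypothesis yields $|A|^M < \lambda = \mathrm{crit}(j)$, so enumerating $A$ as $\langle a_\xi : \xi < \eta\rangle$ with $\eta < \lambda$ we obtain $j(A) = \langle j(a_\xi) : \xi < \eta\rangle = j{''}A$. By elementarity, $j(A)$ is a maximal antichain of $j(P)$ in $N$, and hence $j{''}A \in \Anti(j(P))^N$. For the lifting in \eqref{G-star}, the observation recorded right after the definition of $(M,N)$-regularity gives that $G = j^{-1}{''}G^*$ is $P$-generic over $M$; then the standard recipe $j(\dot{x}^G) = j(\dot{x})^{G^*}$ defines the extension, with well-definedness and elementarity following from the fact that $p \in G$ implies $j(p) = \restr{j}{P}(p) \in G^*$.

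For part (ii), the strategy is to re-run the verification from (i) inside $N$. Since $\restr{j}{P} \in N$ by hypothesis, the function together with its domain $P$ and its range inside $j(P)$ are all available to $N$, so order preservation and incompatibility preservation, being $\Delta_0$ statements about elements of $N$, transfer to $N$ immediately. For the antichain clause, let $A \in \Anti(P)^N$; by the inclusion $\Anti(P)^N \sub \Anti(P)^M$ we have $A \in \Anti(P)^M$, so (i) supplies $j{''}A \in \Anti(j(P))^N$, which is exactly the witness $N$ requires (and which $N$ can read off since $\restr{j}{P} \in N$). Having established internal regularity of $\restr{j}{P} : P \to j(P)$ in $N$, the factorization $j(P) \equiv P * j(P)/\dot{G}$ then follows by applying Fact \ref{fact:q} inside $N$.

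The one step calling for genuine care is the identity $j(A) = j{''}A$ for $A \in \Anti(P)^M$: this is precisely where the $\lambda$-cc assumption and the identity $\lambda = \mathrm{crit}(j)$ combine, and without the $\lambda$-cc hypothesis the conclusion would fail, since a maximal antichain of size $\geq\lambda$ need not be preserved pointwise under $j$. A second subtle point is the assumption $\Anti(P)^N \sub \Anti(P)^M$ in (ii): the reverse inclusion is automatic when $M \sub N$, but it is the stated direction that lets us reduce internal antichain preservation in $N$ to the external statement supplied by (i). Once these two observations are in place, the rest of the argument is essentially bookkeeping against the definitions.
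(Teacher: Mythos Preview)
Your proposal is correct and follows essentially the same approach as the paper's proof: both use elementarity for order and (in)compatibility preservation, obtain $j(A)=j{''}A$ for maximal antichains via the $\lambda$-cc and $\mathrm{crit}(j)=\lambda$, and for (ii) deduce internal regularity in $N$ from $(M,N)$-regularity together with the hypothesis $\Anti(P)^N\sub\Anti(P)^M$, then invoke Fact~\ref{fact:q} inside $N$. Your write-up simply unpacks in more detail what the paper leaves terse.
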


\begin{proof}
(i) By elementarity, $j$ preserves the ordering relation and compatibility between $P$ and $j(P)$. To argue for regularity, it suffices to show that if $A \in M$ is a maximal antichain in $M$ then $j''A \in N$ is a maximal antichain in  $j(P)$. This follows immediately by elementarity and the fact $j''A = j(A)$, which holds since $M \models$ ``$|A|<\lambda$'', and $j$ is the identity below $\lambda$.

(ii) First notice that $\restr{j}{P} \in N$ implies that $P = \dom{\restr{j}{P}} \in N$. To be able to carry out the quotient analysis from Fact \ref{fact:q} in $N$, it suffices to assume that $\restr{j}{P}$ is a regular embedding in $N$ which follows from the fact that it is an $(M,N)$-regular embedding and (\ref{eq:inverse}) holds.
\end{proof}

When $G$ is $P$-generic over $N$ and item (\ref{eq:ii}) of Fact \ref{fact:regular} applies, the definition of the quotient $j(P)/G$ is expressible in $N[G]$ and we can write:

\beq \label{eq:q2} j(P)/G = \set{p^* \in j(P)}{N[G] \models p^* \mbox{ is compatible with }j''G}.\eeq

\brm
Naively, one could try to weaken the assumption (\ref{eq:inverse}) and ask just for $P$ being an element of $N$ which is easier to ensure in general. With the assumption that $P \in N$, we could write $N[G]$; however this does not ensure that the quotient forcing $j(P)/G$ is an element of $N[G]$, blocking the final step which would show that $N[G^*]$ (where $G^*$ is as in (\ref{G-star})) can be decomposed as $N[G][H]$, where $H$ is $j(P)/G$-generic.\footnote{Notice for instance that without some extra assumptions such as (\ref{eq:inverse}), $P \in N$ does not in general guarantee $N[G] \sub N[G^*]$.}
\erm

\subsection{Forcings not adding branches to Aronszajn trees}\label{sec:A}

It is known that if $T$ is a tree of height $\mu$ with $\cf(\mu) = \kappa^+$ (with no limit on the size of the levels of $T$), then a forcing $P$ which is $\kappa^+$-square-cc\footnote{We say that $P$ is $\kappa^+$-square-cc if $P \x P$ is $\kappa^+$-cc.} does not add a new cofinal branch to $T$ (see for instance \cite{UNGER:1}).

However, if $T$ is a $\kappa^+$-Aronszajn tree, it is possible to weaken the assumption on $P$ to the usual (i.e. ``non-square'') chain condition, albeit at a smaller cardinal (see \cite{Kunen:new}, Exercise V.4.21):

\begin{Fact}\label{Kunen}
\bce[(i)]
\item Suppose $T$ is well-pruned $\kappa^+$-Aronszajn tree. Then for every $t \in T$, there is a level of the tree $T$ above $t$ which has size $\kappa$.
\item It follows that if $P$ is $\kappa$-cc, then it does not add a cofinal branch to $T$.
\ece
\end{Fact}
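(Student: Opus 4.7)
The plan is to first establish (i) by contradiction via a stabilization argument on level sizes, and then to deduce (ii) by applying (i) to the canonical subtree of potential values of a forced cofinal branch.

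For (i), suppose for contradiction that some $t\in T$ has every level of $T$ above $\mathrm{level}(t)$ of size $<\kappa$. Passing to $T_t=\{s\in T:s\geq t\}$ yields a well-pruned $\kappa^+$-Aronszajn tree all of whose levels have size $<\kappa$. Well-prunedness gives injections $\mathrm{Lev}_\alpha(T_t)\hookrightarrow\mathrm{Lev}_\beta(T_t)$ for $\alpha<\beta$, so the level-size function $\alpha\mapsto|\mathrm{Lev}_\alpha(T_t)|$ is non-decreasing. Since it maps $\kappa^+$ into $\kappa$ and $\cf(\kappa^+)=\kappa^+>\kappa$, it stabilizes at some cardinal $\mu^*<\kappa$ above some level $\alpha^*$. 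For each $s\in\mathrm{Lev}_{\alpha^*}(T_t)$ and $\alpha>\alpha^*$, set $e(s,\alpha)=|\{u\in\mathrm{Lev}_\alpha(T_t):u\geq s\}|$; well-prunedness gives $e(s,\alpha)\geq 1$, and partitioning $\mathrm{Lev}_\alpha(T_t)$ by its level-$\alpha^*$ predecessor yields $\sum_s e(s,\alpha)=\mu^*$. Each function $\alpha\mapsto e(s,\alpha)$ is non-decreasing with image of size $\leq \mu^*+1<\kappa^+$, so it stabilizes at some $e(s)$. Combining the pointwise stabilization with the cardinal identity, I locate some $s$ with $e(s)=1$; the unique chain of extensions of this $s$ is then a cofinal branch of $T$ through $s$, contradicting Aronszajn-ness.

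For (ii), assume $P$ is $\kappa$-cc and $P$ forces the existence of a cofinal branch $\dot b$ of $T$; since $T$ is Aronszajn in $V$, any such $\dot b$ is automatically not in $V$. In $V$, form the subtree $T'=\{s\in T:\exists p\in P,\;p\Vdash s\in\dot b\}$. It is downward-closed, since predecessors of any branch member are themselves in the branch. If $s_1\neq s_2$ both lie in $T'\cap\mathrm{Lev}_\alpha$, any conditions $p_i\Vdash s_i\in\dot b$ are pairwise incompatible (a common extension would force two distinct level-$\alpha$ elements into $\dot b$, contradicting branch-hood), so the $\kappa$-cc of $P$ gives $|T'\cap\mathrm{Lev}_\alpha|<\kappa$ for every $\alpha$. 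A short forcing refinement (given $s\in T'$ and $\alpha>\mathrm{level}(s)$, refine any $p\Vdash s\in\dot b$ to decide the level-$\alpha$ member of $\dot b$ above $s$) shows $T'$ is well-pruned in $V$, and since $\dot b$ is forced to meet every level, $T'$ has height $\kappa^+$. Finally, $T'$ admits no cofinal branch in $V$, since such a branch would be a cofinal branch of $T$ in $V$, contradicting Aronszajn-ness. Thus $T'$ is a well-pruned $\kappa^+$-Aronszajn tree in $V$ with all levels of size $<\kappa$, directly contradicting (i).

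The main obstacle is extracting a node $s$ with $e(s)=1$ in (i) when the stabilized value $\mu^*$ is infinite: the identity $\sum_s e(s)=\mu^*$ with $\mu^*$ summands each $\geq 1$ is then cardinally consistent with every $e(s)\geq 2$, so the naive pigeonhole only gives what we need when $\mu^*$ is finite. My plan is to iterate the stabilization argument by descending into subtrees $T^s=\{u\in T_t:u\geq s\}$, each again a well-pruned $\kappa^+$-Aronszajn tree with level sizes bounded by $\mu^*$; either some $T^s$ yields a cofinal branch directly, or the stabilized values strictly drop along the descent, eventually reducing to the finite case which forces uniqueness. This is the step where I would work carefully against the detailed argument of Exercise V.4.21 in Kunen's book cited in the paper.
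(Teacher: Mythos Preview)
Your treatment of (ii) is correct and matches the paper's argument: form the subtree $T'$ of nodes that some condition forces into $\dot b$, use the $\kappa$-cc to bound each level of $T'$ below $\kappa$, check that $T'$ is a well-pruned $\kappa^+$-Aronszajn tree in $V$, and invoke (i).

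For (i) your approach differs from the paper's, and the infinite-$\mu^*$ case has a real gap. The dichotomy you assert --- that along the descent either a cofinal branch appears or the stabilized value strictly drops --- is not justified and in fact fails. With $\mu^*$ infinite it is perfectly consistent that $e(s)=\mu^*$ for \emph{every} $s$ at level $\alpha^*$: for example, $\mu^*=\aleph_0$ many nodes each with $e(s)=\aleph_0$ satisfies $\sum_s e(s)=\aleph_0$. Descending into any such $T^s$ reproduces the identical configuration, and iterating produces only a chain whose length equals the number of descent steps you carry out, not a branch cofinal in $\kappa^+$. Nothing forces the stabilized cardinal to decrease, so the recursion has no reason ever to reach the finite case. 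Your stabilization idea does give a clean proof when $\kappa=\omega$ (where $\mu^*<\omega$ is automatically finite), but not beyond.

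The paper's proof of (i) goes a different route: pick a node $t_\alpha$ at each level $\alpha<\kappa^+$ and use Fodor's lemma to press down and extract a stationary set of $\alpha$'s on which the $t_\alpha$ are pairwise comparable; this chain is then a cofinal branch, contradicting Aronszajn-ness. If you prefer to stay closer to your outline, you would need to replace the cardinal invariant $e(s)$ by a well-founded rank so that descent genuinely terminates --- but at that point it is a different argument, and the Fodor route is shorter.
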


\begin{proof}
(i) For contradiction assume that all levels above $t$ have size $<\kappa$, and using Fodor's lemma, find a stationary set on which the nodes of the tree form a cofinal branch.

(ii) If $\dot{b}$ is a name for a cofinal branch through $T$, it can be used to build back in $V$ a subtree $S$ of $T$ of height $\kappa^+$ with levels of size $<\kappa$. By (i), $S$ must have a cofinal branch, and it is a cofinal branch through $T$ as well. Contradiction.
\end{proof}

We shall further use the following lemma due to Unger (see \cite[Lemma 6]{UNGER:1}), which generalizes an analogous result in \cite{JSkurepa} which is formulated for $\kappa = \omega$:

\begin{Fact}\label{f:spencer}
Let $\kappa,\lambda$ be cardinals with $\lambda$ regular and $2^\kappa \ge \lambda$. Let $P$ be $\kappa^+$-cc and $Q$ be $\kappa^+$-closed. Let $\dot{T}$ be a $P$-name for a $\lambda$-tree. Then in $V[P]$, forcing with $Q$ cannot add a cofinal branch through $T$.
\end{Fact}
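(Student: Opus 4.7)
My plan is to run the standard Silver-style tree-of-conditions argument in $V$ rather than in $V[P]$. Suppose toward a contradiction that $\dot b$ is a $Q$-name in $V[P]$ for a cofinal branch through $T$ which does not lie in $V[P]$; viewing $\dot b$ as a $(P \times Q)$-name in $V$, fix $p^* \in P$ forcing this.

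The key ingredient is a splitting lemma: below any $q \in Q$ and above any $\alpha_0 < \lambda$, one can find $q^0, q^1 \le q$, an ordinal $\alpha \ge \alpha_0$, and $P$-names $\dot t^0 \ne \dot t^1$ for nodes of $\dot T_\alpha$ such that $(p^*, q^i) \Vdash \dot t^i \in \dot b$ for $i = 0, 1$. Otherwise, from some stage onwards, the map $\alpha \mapsto$ ``the unique $t \in T_\alpha$ forced into $\dot b$ by some extension of $(p^*, q)$'' would define a cofinal branch of $T$ already in $V[P]$, contradicting newness.

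Iterating, I build in $V$ a tree $\{q_s : s \in 2^{\le \kappa}\} \sub Q$, levels $\alpha_\beta < \lambda$ for $\beta < \kappa$, and $P$-names $\dot t_s$ for nonempty $s \in 2^{<\kappa}$, with $q_t \le q_s$ whenever $s \sub t$, and at each successor stage $(p^*, q_{s\conc i}) \Vdash \dot t_{s\conc i} \in \dot b \cap \dot T_{\alpha_{|s|}}$ and $p^* \Vdash \dot t_{s\conc 0} \ne \dot t_{s\conc 1}$. At stage $\beta$, the splits produced by the splitting lemma for each current $q_s$ are synchronized to a common $\alpha_\beta < \lambda$ by one further extension, using regularity of $\lambda$ together with the bound on $|2^\beta|$ available in the paper's setting. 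At each limit stage $\delta \le \kappa$, the sequence $\langle q_{s \rest \beta} : \beta < \delta \rangle$ lies in $V$, so the $\kappa^+$-closure of $Q$ in $V$ supplies a lower bound.

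Set $\alpha^* := \sup_{\beta < \kappa} \alpha_\beta$; since $\lambda$ is regular and $\kappa < \lambda$, $\alpha^* < \lambda$. Pass to $V[G_P]$ for any $G_P \ni p^*$. For each $s \in 2^\kappa$, $q_s$ forces a cofinal branch of $T$ threading the nodes $\dot t_{s \rest (\beta+1)}[G_P]$ at the levels $\alpha_\beta$; for $s \ne s'$ first differing at $\beta$, these branches disagree at $\alpha_\beta$ and hence, since $T$ is a tree, also at $\alpha^*$. Strengthening each $q_s$ to decide $\dot b \rest \alpha^*$ yields $2^\kappa \ge \lambda$ pairwise distinct nodes at level $\alpha^*$ of $T$ in $V[G_P]$, contradicting that $T$ is a $\lambda$-tree. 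The main obstacle is that the natural setting is $V[P]$, while the only closure hypothesis on $Q$ lives in $V$; this is why the splitting lemma must be phrased in terms of $P$-names below a fixed $p^*$, why the construction has to be carried out in $V$, and why $\kappa^+$-cc of $P$ is used tacitly throughout to ensure that the levels of $T$ and the final counting are well-behaved after the $P$-forcing.
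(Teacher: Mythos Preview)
The paper does not supply a proof of this fact; it is quoted from Unger \cite[Lemma~6]{UNGER:1} without argument. Your outline --- a Silver-style tree $\{q_s : s \in 2^{\le\kappa}\}$ of $Q$-conditions built in $V$, followed by counting $2^\kappa \ge \lambda$ many nodes at one level of $T$ in $V[G_P]$ --- is the standard proof and is essentially the argument in Unger's paper, so in that sense your approach agrees with what the paper invokes.

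There is, however, a genuine gap in your justification of the splitting lemma. The negation of your lemma says only that for all candidate $q^0,q^1,\dot t^0,\dot t^1$ one has $\neg(p^* \Vdash \dot t^0 \ne \dot t^1)$; this merely produces \emph{some} $p \le p^*$ forcing $\dot t^0 = \dot t^1$, not that $p^*$ does, so your ``unique $t \in T_\alpha$ forced into $\dot b$ by some extension of $(p^*,q)$'' need not be well defined in $V[G_P]$ and no branch in $V[P]$ falls out. The correct argument for the split uses both hypotheses explicitly rather than ``tacitly'': enumerate a maximal antichain $\{p_\xi : \xi < \mu \le \kappa\}$ below $p^*$ (via $\kappa^+$-cc) and build two decreasing $\mu$-sequences below $q$ in $Q$ (via $\kappa^+$-closure), arranging at stage $\xi$ that below an extension of $p_\xi$ the two current $Q$-conditions decide $\dot b$ to pass through distinct nodes at some $\alpha_\xi$; taking lower bounds $q^0,q^1$ and $\alpha = \sup_\xi \alpha_\xi$, one further ``decide'' pass yields $P$-names for $\dot b(\alpha)$ that every $p_\xi$, hence $p^*$, forces to differ. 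A smaller point: your synchronization at stage $\beta$ needs $2^\beta < \lambda$, which the fact as stated does not assume; the clean fix is to build the tree only to height the least $\gamma \le \kappa$ with $2^\gamma \ge \lambda$, so that $2^{<\gamma} < \lambda$ and the sup $\alpha^*$ stays below $\lambda$.
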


\brm
In standard proofs in the literature for the tree property (for instance \cite{ABR:tree}, \cite{CUMFOR:tp}, or \cite{UNGER:1}), the assumption on $P$ is always that of the Knasterness (or square-cc). The reason is probably that before Fact \ref{f:spencer} was widely known, a $\kappa^+$-closed forcing (an analogue of $R^{*1}$ in the proof of Theorem \ref{th:improve}) was first applied to argue that a certain $\kappa^{++}$-tree $T$ (in fact an Aronszajn tree) does not get a cofinal branch. This $\kappa^+$-closed forcing typically collapses $\kappa^{++}$ to $\kappa^+$, which makes $T$ a tree whose height has cofinality $\kappa^+$. To argue that $P$ does not add cofinal branches to $T$ now, the stronger version with $\kappa^+$-square-cc was necessary. With Fact \ref{f:spencer}, we can first consider the $\kappa^+$-cc forcing, and only then deal with the collapsing $\kappa^+$-closed forcing.
\erm

\section{Indestructibility in the Mitchell model} \label{sec:Mitchell}

Let $\omega \le \kappa < \lambda$ be cardinals, $\kappa^{<\kappa} = \kappa$ and $\lambda$ weakly compact, and $\M  =\M(\kappa,\lambda)$ the Mitchell forcing. We will show that $\TP(\lambda)$ is indestructible over $V[\M]$ under all $\kappa^+$-cc forcings $\Q$ which live in $V[\Add(\kappa,\lambda)]$, an intermediate model between $V$ and $V[\M]$.

First note that it is enough to consider $\kappa^+$-cc forcings $\Q$ which have size $\kappa^{++}$ in $V[\M]$. This follows from the following more general lemma:

\begin{lemma}\label{lm:small}
\bce[(i)]
\item Suppose $\lambda^{<\lambda} = \lambda$ is a cardinal and $\Q$ is $\lambda$-cc. If $\Q$ adds a $\lambda$-Aronszajn tree, then there exists a regular $\lambda$-cc subforcing $\bar{\Q}$ of $\Q$ which adds a $\lambda$-Aronszajn tree. 
\item In particular, if no forcing notion of size at most $\lambda$ which is $\lambda$-cc adds a $\lambda$-Aronszajn tree, then no $\lambda$-cc forcing adds a $\lambda$-Aronszajn tree.
\ece
\end{lemma}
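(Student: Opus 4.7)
The plan is to derive (ii) from (i) by taking the contrapositive: if $\Q$ is $\lambda$-cc and adds a $\lambda$-Aronszajn tree, then (i) will provide a regular subforcing $\bar{\Q}$ of size at most $\lambda$, itself $\lambda$-cc, adding a $\lambda$-Aronszajn tree, contradicting the hypothesis of (ii). The substance of the lemma is therefore in (i), and the size bound $|\bar{\Q}|\le\lambda$ is what we need to deliver.

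To prove (i), let $\dot{T}$ be a $\Q$-name forced to be a $\lambda$-Aronszajn tree. Since each level of $\dot T$ has size $<\lambda$ and $\lambda$ is regular, the tree has at most $\lambda$ nodes; after a bijection in the extension absorbed into the name, we may assume $\dot{T}$ is forced to be a tree on a subset of $\lambda$. The tree structure is then fully determined by the atomic statements ``$\alpha\in\dot{T}$'' for $\alpha<\lambda$ and ``$\alpha<_{\dot{T}}\beta$'' for $\alpha,\beta<\lambda$. For each such statement, fix a maximal antichain in $\Q$ deciding it; by $\lambda$-cc each has size $<\lambda$, so the union $\Sigma$ of all of these antichains has cardinality at most $\lambda\cdot\lambda=\lambda$.

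Let $\bar{\Q}$ be the complete subalgebra of $\RO(\Q)$ generated by $\Sigma$. The main computation is that $|\bar{\Q}|\le\lambda$: iterate for $\lambda$ stages, starting from $\Sigma_0=\Sigma$, at each successor stage closing under finite Boolean operations and under suprema of antichains already lying in the current stage, and taking unions at limits. Since the number of $<\lambda$-sized antichains contained in a set of size $\le\lambda$ is bounded by $\lambda^{<\lambda}=\lambda$, each successor stage remains of size $\le\lambda$, and hence $|\Sigma_\lambda|\le\lambda$. By regularity of $\lambda$ and $\lambda$-cc of the ambient algebra, any antichain lying in $\Sigma_\lambda$ has fewer than $\lambda$ elements and so is already contained in some $\Sigma_\alpha$ with $\alpha<\lambda$, whose supremum is in $\Sigma_{\alpha+1}$; so $\bar{\Q}:=\Sigma_\lambda$ is genuinely a complete subalgebra. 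This is exactly the step where the arithmetic hypothesis $\lambda^{<\lambda}=\lambda$ is essential.

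By construction $\bar{\Q}$ inherits the $\lambda$-cc from $\Q$, is a regular subforcing of $\Q$ (being a complete subalgebra of $\RO(\Q)$), and decides all atomic statements describing $\dot{T}$, so $\dot{T}$ may be taken as a $\bar{\Q}$-name. If $\bar{G}$ is $\bar{\Q}$-generic over $V$, the resulting tree $T=\dot{T}^{\bar G}\in V[\bar G]$ is still $\lambda$-Aronszajn in $V[\bar G]$: $\lambda$ remains a regular cardinal by $\lambda$-cc, the levels of $T$ are the same sets as in $V[G]$ (where $G$ is any $\Q$-generic extending $\bar{G}$) and so still have size $<\lambda$, and any cofinal branch in $V[\bar G]\subseteq V[G]$ would give a cofinal branch in $V[G]$, contradicting the Aronszajn property there. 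The principal technical obstacle is the $|\bar{\Q}|\le\lambda$ computation; everything else is essentially bookkeeping.
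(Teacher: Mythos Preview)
Your argument is correct, but it follows a different route than the paper's proof. The paper takes an elementary submodel $M \prec H(\theta)$ of size $\lambda$ closed under $<\lambda$-sequences (this is where $\lambda^{<\lambda}=\lambda$ is used), lets $\bar{\Q}$ be the transitive collapse $\pi(\Q)$, and argues via elementarity: the collapse map $\pi^{-1}$ restricted to $\bar{\Q}$ is a regular embedding into $\Q$ because the $<\lambda$-closure of $M$ guarantees that every maximal antichain of $\bar{\Q}$ in $V$ already lies in $M$; then, lifting $\pi^{-1}$ to the generic extensions, any tree $T$ that $\bar{M}[\bar{G}]$ believes to be $\lambda$-Aronszajn satisfies $\pi^{-1}(T)=T$ (since $T \sub \lambda$), and elementarity transfers Aronszajn-ness to $H(\theta)[G]$. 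Your approach instead builds $\bar{\Q}$ explicitly as the complete subalgebra of $\RO(\Q)$ generated by antichains deciding the atomic diagram of $\dot{T}$, using $\lambda^{<\lambda}=\lambda$ in the cardinality computation for the iterated closure. The elementary-submodel argument is slicker in that it avoids the Boolean-algebraic bookkeeping (your closure under suprema of antichains does suffice to give a complete subalgebra, though this requires a short side argument), while your construction is more concrete and makes it transparent exactly which part of $\Q$ is needed to capture $\dot{T}$. Both deliver the size bound $|\bar{\Q}|\le\lambda$ that (ii) needs, and both verify Aronszajn-ness in $V[\bar{G}]$ by the same downward argument through $V[\bar{G}]\sub V[G]$.
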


\begin{proof}
Let $\Q$ be a $\lambda$-cc forcing notion and assume that $q \in \Q$ forces that there is a $\lambda$-Aronszajn tree. Choose a large enough regular $\theta$ so that $\Q \in H(\theta)$. Let $M$ be an elementary submodel of $H(\theta)$ of size $\lambda$ closed under $<\lambda$-sequences which contains $\lambda$ as a subset and $\Q$ and $\lambda$ as elements. Let $\pi: M \to \bar{M}$ be the transitive collapse (note that the critical point of $\pi^{-1}$ is strictly above $\lambda$ since $\lambda+1$ is included in $M$). Let us denote $\pi(\Q)$ by $\bar{\Q}$. By elementarity, $\bar{\Q}$ is $\lambda$-cc in $\bar{M}$, but also in $V$ (because the image of any antichain of size $\lambda$ of $\bar{\Q}$ in $V$ would map via $\pi^{-1}$ to an antichain of size $\lambda$ in $\Q$, contradicting the $\lambda$-cc of $\Q$). Let $i$ be the restriction of $\pi^{-1}$ to $\bar{\Q}$. It is easy to check that \beq i: \bar{\Q} \to \Q \eeq is a regular embedding because by the closure of $M$ under $<\lambda$-sequences, every maximal antichain $A$ of $\bar{\Q}$ which exists in $V$ is an element of $M$ (and $\pi^{-1}(A) = \pi^{-1}{''}A$). Let $G$ be a $\Q$-generic over $V$ containing $q$; then $\bar{G} = \pi''G$ is $\bar{\Q}$-generic over $V$ and $\pi^{-1}$ lifts to \beq \pi^{-1}: \bar{M}[\bar{G}] \to H(\theta)[G].\eeq By our assumption, $H(\theta)[G]$ thinks there is a $\lambda$-Aronszajn tree, and by elementarity $\bar{M}[\bar{G}]$ must think the same. Let $T$ be a tree (which we construe as subset of $\lambda$) such that $(T$ is a $\lambda$-Aronszajn tree$)^{\bar{M}[\bar{G}]}$. Since $\pi^{-1}(T) = T$, $T$ is actually a $\lambda$-Aronszajn tree in $H(\theta)[G]$, and therefore in $V[G]$. It follows that $T$ cannot have a cofinal branch in $V[\bar{G}]\sub V[G]$, and therefore $(T$ is a $\lambda$-Aronszajn tree$)^{V[\bar{G}]}$. Thus $\bar{\Q}$ is a regular subforcing of $\Q$ which is $\lambda$-cc, has size $\lambda$ and adds a $\lambda$-Aronszajn tree.
\end{proof}

Let us now prove the main theorem of this section. Note that it is open whether Theorem \ref{th:improve} can be extended to include all $\kappa^+$-cc forcings $\Q$ living in $V[\M]$ (see Remark \ref{rm:open} and open question {\bf Q1} in Section \ref{sec:open}).

\begin{theorem}\label{th:improve}
Assume $\omega \le \kappa < \lambda$ are cardinals, $\kappa^{<\kappa} = \kappa$ and $\lambda$ is weakly compact. Let $\M$ be the standard Mitchell forcing $\M(\kappa,\lambda)$. 

Suppose $\Q \in V[\Add(\kappa,\lambda)]$ is $\kappa^+$-cc in $V[\Add(\kappa,\lambda)]$ (equivalently $\kappa^+$-cc in $V[\M]$), then $$V[\M * \dot{\Q}] \models \TP(\kappa^{++}).$$
In other words, the tree property at $\kappa^{++}$ is indestructible under any $\kappa^+$-cc forcing which lives in $V[\Add(\kappa,\lambda)]$.
\end{theorem}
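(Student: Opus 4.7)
By Lemma \ref{lm:small}, we may assume $|\Q| \le \lambda$. Suppose for contradiction that $\dot T$ is forced by $\M * \dot\Q$ to be a $\lambda$-Aronszajn tree. The plan is to adapt Abraham's proof of $\TP(\lambda)$ in $V[\M]$ to the setting of $\M * \dot\Q$, exploiting the hypothesis $\Q \in V[\Add(\kappa,\lambda)]$ to extend the Abraham product decomposition.

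The key observation is that, setting $R^0 = \Add(\kappa,\lambda)$ and letting $R^1$ denote the $\kappa^+$-closed term forcing from the Abraham analysis of $\M$, the product $(R^0 * \dot\Q) \times R^1$ projects naturally onto $\M * \dot\Q$; here $R^0 * \dot\Q$ is $\kappa^+$-cc in $V$ because $R^0$ is $\kappa^+$-Knaster (using $\kappa^{<\kappa} = \kappa$) and $\dot\Q$ is forced to be $\kappa^+$-cc in $V[R^0]$. Thus $V[\M * \dot\Q] \sub V[(R^0 * \dot\Q) \times R^1]$ with the usual split into a $\kappa^+$-cc factor and a $\kappa^+$-closed factor, exactly as for $\M$. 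Now apply the weak compactness of $\lambda$: let $j : M_0 \to M_1$ be an elementary embedding with critical point $\lambda$, where $M_0, M_1$ are transitive of size $\lambda$, closed under $<\lambda$-sequences, and contain the relevant parameters including $\dot T$. Following the Abraham-style lifting argument with $R^0 * \dot\Q$ replacing $R^0$ --- using the $\kappa^+$-closure of $R^1$ to construct a $j(R^0 * \dot\Q)$-generic $G^+$ over $M_1$ extending $j''G$, where $G$ is the $(R^0 * \dot\Q)$-generic --- we lift $j$ to $j^+ : M_0[G] \to M_1[G^+]$. Any node $x$ at level $\lambda$ of $j^+(T)$ then yields a cofinal branch $b_x = \{y \in T : y \le_{j^+(T)} x\}$ through $T$ in $V[(R^0 * \dot\Q) \times R^1]$.

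The remaining task is the descent: exhibit some such $b_x$ lying in $V[\M * \dot\Q]$, which will contradict the Aronszajn-ness of $T$. For this we invoke Fact \ref{f:spencer} applied with $P = R^0 * \dot\Q$ ($\kappa^+$-cc in $V$) and $Q = R^1$ ($\kappa^+$-closed in $V$), combined with a careful analysis of the quotient forcing $((R^0 * \dot\Q) \times R^1)/(\M * \dot\Q)$ in $V[\M * \dot\Q]$, which by analogy with the quotient $\dot\D$ in $V[\M]$ should be $\kappa^+$-distributive and $\kappa^{++}$-cc. The main obstacle is precisely this descent step: since $T$ is a $(\M * \dot\Q)$-name rather than an $(R^0 * \dot\Q)$-name, Fact \ref{f:spencer} does not apply directly to $T$, so one must either verify that the above quotient does not add new cofinal branches to $T$ over $V[\M * \dot\Q]$, or arrange the lifting construction so that the chosen $b_x$ lies in $V[\M * \dot\Q]$ from the outset.
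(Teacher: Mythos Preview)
Your framework is right---the projection from $(R^0 * \dot\Q) \times R^1$ onto $\M * \dot\Q$, with $R^0 * \dot\Q$ being $\kappa^+$-cc and $R^1$ being $\kappa^+$-closed, is exactly the starting point the paper uses---but the proof has a genuine gap, which you in fact acknowledge.

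First a minor point: the lifting step as you describe it is garbled. The $\kappa^+$-closure of $R^1$ plays no role in producing the $j$-side generic. The paper simply forces with $j((R^0 \times R^1)*\dot\Q)$ over $V$; since $(R^0 \times R^1)*\dot\Q$ is $\lambda$-cc, $j$ restricted to it is a regular embedding, so pulling back gives generics $(G^0 \times G^1)*h$ on the domain side and $j$ lifts to $j: M_0[G][h] \to M_1[G^*][h^*] = M_1[G][h][G_Q]$, where $G$ is the induced $\M$-generic and $G_Q$ is generic for the quotient $Q = j(\M * \dot\Q)/(G*h)$.

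The real gap is the one you flag yourself: the descent. The paper does \emph{not} attempt to show the branch already lies in $V[\M*\dot\Q]$, nor does it analyse the quotient $((R^0*\dot\Q)\times R^1)/(\M*\dot\Q)$ as you suggest (note that being $\kappa^+$-distributive and $\kappa^{++}$-cc would not suffice here anyway). Instead it shows directly that the quotient $Q$ cannot add a cofinal branch to $T$ over $M_1[G][h]$. The missing technical step is a projection (Claim~\ref{pi}) from $\bigl(j(R^0*\dot\Q)/(G^0*h)\bigr)\times R^1_\lambda$ onto $Q$, where $R^1_\lambda$ is the term forcing for the tail of $j(\M)$. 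One then verifies (Claim~\ref{claim:cc}) that $j(R^0*\dot\Q)/(G^0*h)$ is $\kappa^+$-cc over $M_1[G][h]$ and $R^1_\lambda$ is $\kappa^+$-closed in $M_1[G]$. The crucial ingredient your outline never reaches is Fact~\ref{Kunen}: a $\kappa^+$-cc forcing cannot add a cofinal branch to a $\kappa^{++}$-\emph{Aronszajn} tree. This is what handles the $\kappa^+$-cc factor; Fact~\ref{f:spencer} alone is not enough, because with only $\kappa^+$-cc (rather than $\kappa^+$-square-cc or Knaster) you cannot rule out new branches in an arbitrary $\lambda$-tree. After Fact~\ref{Kunen} disposes of the cc factor, Fact~\ref{f:spencer} applied over $M_1[G]$ with $P = \dot\Q^{G^0} * j(R^0*\dot\Q)/(G^0*\dot h)$ and the $\kappa^+$-closed $R^1_\lambda$ finishes the argument.
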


\begin{proof}
By Lemma \ref{lm:small}, we can assume that $\Q$ has size at most $\kappa^{++}$ in $V[\M]$ and we can view it as a subset of $\kappa^{++}$ by using an isomorphic copy if necessary.

Let us first make the convention that we identify $\dot{\Q}$ with an $R^0 = \Add(\kappa,\lambda)$-name and we only consider conditions $(p,\dot{q})$ in $\M* \dot{\Q}$ in which $\dot{q}$ depends only on the $R^0$-information of $\M$.

Let $R^0 \x R^1$ denote the product from which there is a projection onto $\M$ (see Section \ref{sec:prelim:M} for more details).

Let us fix a weakly compact embedding $j: M \to N$ with critical point $\lambda$ such that $M$ has size $\lambda$, is closed under $<\lambda$-sequences and contains all relevant parameters, in particular the forcing $(R^0 \x R^1) * \dot{\Q}$. We can further assume that $j$ itself is an element of $N$ so that Fact \ref{fact:regular}(ii), in particular (\ref{eq:inverse}), applies to $M$, $N$ and $(R^0 \x R^1) * \dot{\Q}$. 

Let $(\tilde{G}^0 \x \tilde{G}^1) * h^*$ be $j((R^0 \x R^1) * \dot{\Q})$-generic filter over $V$ (and hence also over $M$). Since $(R^0 \x R^1) * \dot{\Q}$ is $\lambda$-cc, $j$ restricted to $(R^0 \x R^1) * \dot{\Q}$ is an $(M,N)$-regular embedding, and therefore $(\tilde{G}^0 \x \tilde{G}^1) * h^*$ generates an $M$-generic filter $(G^0 \x G^1) * h$ for $(R^0 \x R^1) * \dot{\Q}$, and $\tilde{G}^0 \x \tilde{G}^1$ generates an $M$-generic filter $G^*$ for $j(\M)$ and an $M$-generic filter $G$ for $\M$ such that $j$ lifts in $M[(\tilde{G}^0 \x \tilde{G}^1)*h^*]$ to: \beq \label{eq:decompose} j: M[G][h] \to N[G^* * h^*] = N[G][h][G_Q],\eeq where $G_Q$ is a generic filter for the quotient $Q = j(\M*\dot{\Q})/G*h$. See Fact \ref{fact:regular}(ii) for more details regarding the decomposition of $N[G^* * h^*]$ to $N[G][h][G_Q]$ in (\ref{eq:decompose}).

We will to show that over $N[G][h]$, \beq \label{eq:embed}\mbox{there is a projection onto } Q \mbox{ from } j(R^0 * \dot{\Q}))/(G^0 * h) \x R^1_\lambda,\eeq where $R^1_\lambda$ is the term forcing of $j(\M)/G$ (it is composed of conditions of the form $(0,p^{*1})$ -- we will write just $p^{*1}$). We will further show that $j(R^0 * \dot{\Q})/(G^0 * h)$ is $\kappa^+$-cc over $N[G][h]$ and $R^1_\lambda$ is $\kappa^+$-closed in $N[G]$ which will allow us to finish the argument.

\brm
Notice that it makes sense to consider the generic filter $G^0*h$ and write $N[G^0*h]$: it holds that $\dot{\Q}^G = \dot{\Q}^{G^0}$ since by our assumption $\dot{\Q}$ can be identified with an $R^0$-name.
\erm

Since $j$ is the identity on the conditions in $G$, we have \beq j''(G*h) = \set{(p,j(\dot{q}))}{p \in G \mbox{ and }\dot{q}^G \in h}.\eeq

Let us write explicitly the quotients we are going to use: \beq Q = \set{(p^*,\dot{q}^*) \in j(\M * \dot{\Q})}{N[G][h] \models \mbox{``}(p^*,\dot{q}^*) \mbox{ is compatible with }j''(G*h)\mbox{''}},\eeq where we can assume that $\dot{q}^*$ depends by elementarity only on $j(R^0)$. Further, \begin{multline} j(R^0 * \dot{\Q})/(G^0 * h) = \\ \set{(p^{*0},\dot{q}^*) \in j(R^0 * \dot{\Q})}{N[G^0][h] \models \mbox{``}(p^{*0},\dot{q}^*) \mbox{ is compatible with } j''(G^0 * h)\mbox{''}}.\end{multline} Lastly, \beq R^1_\lambda = \set{(0,p^{*1})}{N[G] \models \mbox{``}(0,p^{*1}) \mbox{ is compatible with }j''G = G\mbox{''}}.\eeq

Let us define a function $\pi: j(R^0 * \dot{\Q}))/(G^0 * h) \x R^1_\lambda \to Q$ by \beq \pi((p^{*0},\dot{q}^*),p^{*1}) = (p^*,\dot{q}^*),\eeq where $p^* = (p^{*0},p^{*1})$.

\begin{Claim}\label{pi}
$\pi$ is a projection from $j(R^0 * \dot{\Q})/G^0 *h \x R^1_\lambda$ onto $Q$.
\end{Claim}

\begin{proof}
First notice that $\pi$ is correctly defined: if $(p^{*0},\dot{q}^*)$ is compatible with $j''(G^0 *h)$, and $(0,p^{*1})$ is compatible with $G$, then $(p^*,\dot{q}^*)$ is compatible with $j''(G*h)$.

If $((p^{*0},\dot{q}^*),p^{*1}) \le ((r^{*0},\dot{s}^*),r^{*1})$, then clearly $p^* \le r^*$; moreover, $p^{*0} \Vdash \dot{q}^* \le \dot{s}^*$ implies $p^* \Vdash \dot{q}^* \le \dot{s}^*$ because $p^* = (p^{*0},p^{*1})$. It follows $(p^*,\dot{q}^*) \le (r^*, \dot{s}^*)$, and hence $\pi$ is order-preserving.

Suppose now $(p^*,\dot{q}^*) \le \pi((r^{*0},\dot{s}^*),r^{*1})= (r^*,\dot{s}^*)$ are given. We wish to find a condition extending $((r^{*0},\dot{s}^*),r^{*1})$ whose $\pi$-image extends $(p^*,\dot{q}^*)$. First notice that $p^* \Vdash \dot{q}^* \le \dot{s}^*$ implies $p^{*0} \Vdash \dot{q}^* \le \dot{s}^*$ because of our convention that $\dot{q}^*$ and $\dot{s}^*$ depend only on $R^0$. Now we use a standard trick with names: Consider conditions $(p^{*0}, \dot{q}^*)$ and $p^{*1'}$ where the name $p^{*1'}$ interprets as $p^{*1}$ below $p^{*0}$ and as $r^{*1}$ otherwise; then $((p^{*0},\dot{q}^*),p^{*1'})$ is as required.
\end{proof}

Finally, we need the following Claim:

\begin{Claim}\label{claim:cc}
\bce[(i)]
\item $R^1_\lambda$ is $\kappa^+$-closed in $N[G]$.
\item $j(R^0 * \dot{\Q})/G^0 *h$ is $\kappa^+$-cc over $N[G][h]$.
\item $\dot{\Q}^{G^0} * j(R^0 * \dot{\Q})/G^0 * \dot{h}$ is $\kappa^+$-cc over $N[G]$.
\ece
\end{Claim}

\begin{proof}
(i) This a standard fact (see for instance \cite{ABR:tree}).

(ii) By elementarity, \beq j(R^0 * \dot{\Q}) \mbox{ is $\kappa^+$-cc over $N$.}\eeq The term forcing $R^1$ is $\kappa^+$-closed over $N$. By Easton's lemma \beq \label{eq:ccc} j(R^0*\dot{\Q}) \mbox{ is $\kappa^+$-cc over $N[G^1]$.}\eeq Since $j$ restricted to $R^0 * \dot{\Q}$ is a regular embedding, $j(R^0 * \dot{\Q})$ factors over $N$ (and then also over $N[G^1]$) as \beq (R^0 * \dot{\Q}) * j(R^0 * \dot{\Q})/\dot{G}^0 * \dot{h},\eeq  where $j(R^0 * \dot{\Q})/\dot{G}^0 * \dot{h}$ is an $R^0 * \dot{\Q}$-name for the quotient. It follows by (\ref{eq:ccc}), and properties of two-step iterations, that over $N[G^1]$, the $\kappa^+$-cc forcing $R^0 * \dot{\Q}$ forces that $ j(R^0 * \dot{\Q})/\dot{G}^0 * \dot{h}$ is $\kappa^+$-cc. In particular, $j(R^0 * \dot{\Q})/G^0 *h$ is $\kappa^+$-cc over $N[G^1][G^0*h]$.

Since there is a natural projection from $(R^0 *\dot{\Q}) \x R^1$ onto $\M * \dot{\Q}$ (analogously to the projection $\pi$ mentioned above), it follows that $j(R^0 * \dot{\Q})/G^0 *h$ is $\kappa^+$-cc over $N[G][h]$ as desired (since the chain condition is preserved downwards).

(iii) Recall that $\dot{\Q}^{G^0}$ is $\kappa^+$-cc in $N[G]$ by our initial assumptions. By (ii) of the present Claim, $\dot{\Q}^{G^0}$ forces over $N[G]$ that $j(R^0 * \dot{\Q})/G^0 *\dot{h}$ is $\kappa^+$-cc. By general forcing properties this means the two-step iteration $\dot{\Q}^{G^0} * j(R^0 * \dot{\Q})/G^0 * \dot{h}$ is $\kappa^+$-cc in $N[G]$.
\end{proof}

With Claims \ref{pi} and \ref{claim:cc}, the theorem is proved as follows. Suppose for contradiction that there is in $M[G][h]$ a $\lambda$-Aronszajn tree $T$. By standard arguments, we can assume that $T$ is also in $N[G][h]$ (and is Aronszajn here), and $T$ has a cofinal branch in $N[G][h][G_Q]$ because of the lifted embedding $j$ in (\ref{eq:decompose}). We will argue that the forcing $Q$ cannot add a cofinal branch to $T$, which is a contradiction.

Working over $N[G][h]$, $\lambda = (\kappa^{++})^{N[G][h]}$ and therefore by Fact \ref{Kunen}(ii) and Claim \ref{claim:cc}(ii), $j(R^0 * \dot{\Q})/G^0 *h$ cannot add a cofinal branch to the $\lambda$-Aronszajn tree $T$. Using the fact that $2^\kappa = \lambda$ in $N[G]$, and Fact \ref{f:spencer} applied over $N[G]$ to the $\kappa^+$-closed forcing $R^1_\lambda$ and to the $\kappa^+$-cc forcing $\dot{\Q}^{G^0} * j(R^0 * \dot{\Q})/G^0 * \dot{h}$ (see Claim \ref{claim:cc}(iii)), it follows that $R^1_\lambda$ cannot add a cofinal branch to $T$ over a generic extension of $N[G][h]$ by the quotient $j(R^0 * \dot{\Q})/G^0 *h$. Thus, the product \beq \label{product} R^1_\lambda \x j(R^0 * \dot{\Q})/G^0 *h \eeq does not add cofinal branches to $T$ over $N[G][h]$. However, by Claim \ref{pi}, there is a projection onto the quotient $Q$ from the product (\ref{product}), and therefore $Q$ cannot add a cofinal branch to $T$. It follows that $T$ has no cofinal branch in $N[G][h][G_Q]$ which is the desired contradiction.
\end{proof}

\brm \label{rm:Prikry}
It is not in general possible to analyse the quotient $j(R^0 * \dot{\Q})/G^0 *h$ (and similar quotients) by arguing that it is equivalent to a two-step iteration $j(R^0)/G^0 * \dot{S}$, for some forcing $\dot{S}$ which deals with the quotient of $j(\dot{\Q})$ by $h$. For instance if $\kappa$ is  a Laver-indestructible supercompact cardinal and $\dot{\Q}$ is the name for the vanilla Prikry forcing, then over $N[G^0][h]$, the quotient $j(R^0 * \dot{\Q})/G^0 *h$ is not equivalent to any forcing of the form $j(R^0)/G^0 * \dot{S}$ because $j(R^0)/G^0$ (which is in fact equivalent to the Cohen forcing $j(R^0)$) collapses $\kappa$ to $\aleph_0$  because $h$ makes $\kappa$ singular with cofinality $\omega$.
\erm

\brm \label{rm:open}
There seems to be no obvious way to improve the Theorem \ref{th:improve} to consider $\kappa^+$-cc forcings $\Q \in V[\M]$: it was essential for the analysis in Theorem \ref{th:improve} to find a projection onto the quotient from the product of a $\kappa^+$-cc and $\kappa^+$-closed forcings, and this strategy does not work when $\Q$ lives in $V[\M]$. For all we know, it may be that the collapsing part of $\M$ introduces a $\kappa^+$-cc forcing of size $\kappa^{++}$ which destroys the tree property at $\kappa^{++}$ in $V[\M]$.
\erm

\subsection{Some applications}\label{sec:app}

\subsubsection{Prikry-type constructions}

Theorem \ref{th:improve} offers significantly easier proofs of the results which obtain the tree property at the second successor of a singular strong limit cardinal $\kappa$. Let us state some examples:

\begin{enumerate}[(1)]
\item Recall the result from \cite{CUMFOR:tp} by Cummings and Foreman: starting with $\kappa < \lambda$, where $\kappa$ is a Laver-indestructible supercompact cardinal and $\lambda$ is weakly compact, they construct a model in which the tree property holds at $\kappa^{++} = \lambda$ with $2^\kappa = \kappa^{++}$ and $\kappa$ is a singular strong limit cardinal with cofinality $\omega$. This result follows immediately as a corollary of Theorem \ref{th:improve} if we force with \beq \M(\kappa,\lambda) * \Prk^{V[\Add(\kappa,\lambda)]}(\dot{U}), \eeq where $\Prk^{V[\Add(\kappa,\lambda)]}(\dot{U})$ is the vanilla Prikry forcing and $\dot{U}$ is an $\Add(\kappa,\lambda)$-name for a normal measure on $\kappa$ in $V[\Add(\kappa,\lambda)]$ (notice that $V[\Add(\kappa,\lambda)]$ and $V[\M(\kappa,\lambda)]$ have the same subsets of $\kappa$).

\item The paper \cite{FHS1:large} generalises (and modifies) the construction in \cite{CUMFOR:tp} and obtains an arbitrarily large value of $2^\kappa$ with the tree property at $\kappa^{++}$ at a singular strong limit $\kappa$ with cofinality $\omega$ (starting with a Laver-indestructible supercompact cardinal $\kappa$ and a weakly compact $\lambda$ above $\kappa$). The result follows by the following straightforward application of Theorem \ref{th:improve}: 

Let $\delta \ge \lambda$ be a cardinal with cofinality at least $\kappa^+$ and consider the forcing \beq \label{eq:cof} \M * (\Add(\kappa,\delta) * \Prk^{V[\Add(\kappa,\delta)]}(\dot{U})),\eeq where $\Prk^{V[\Add(\kappa,\delta)]}(\dot{U})$ is the vanilla Prikry forcing as defined in $V[\Add(\kappa,\delta)]$. Now apply Theorem \ref{th:improve} with \beq \dot{\Q} = (\Add(\kappa,\delta) * \Prk^{V[\Add(\kappa,\delta)]}(\dot{U})).\eeq  Notice that $\Add(\kappa,\delta)$ is isomorphic to $\Add(\kappa,\lambda) \x \Add(\kappa,\delta)$ so that any normal measure $\dot{U}$ in $V[\Add(\kappa,\delta)]$ measures all subsets of $\kappa$ in $V[\M * \Add(\kappa,\delta)]$. In any generic extension by $V[\M*\dot{\Q}]$, $\kappa$ is a singular strong limit cardinal with cofinality $\omega$, the tree property holds at $\kappa^{++} = \lambda$ and $2^\kappa = \delta$.

\item Applications to Prikry-type constructions are not limited to countable cofinalities. Magidor's forcing, see for instance \cite{M:cof}, changes the cofinality of $\kappa$ to any regular cardinal below $\kappa$ with a forcing notion which is $\kappa^+$-cc (under the relevant assumptions, see \cite{M:cof}). Theorem \ref{th:improve} readily gives the following result:\footnote{This theorem appears independently in a preprint \cite{GP:gap} with the ``traditional proof'' following the method in \cite{FHS1:large}.}

\begin{theorem}
Suppose $\kappa <\lambda$ are cardinals, $\kappa$ is a Laver-indestructible supercompact cardinal and $\lambda$ is weakly compact. Let $\mu < \kappa$ be a regular cardinal and $\delta \ge \lambda$ a cardinal with cofinality at least $\kappa^+$. Then there is a forcing notion $\P$ such that in $V[\P]$ only the cardinals in the interval $(\kappa^+,\lambda)$ are collapsed, $\kappa$ is a singular strong limit cardinal with cofinality $\mu$, $2^\kappa = \delta$ and the tree property holds at $\kappa^{++} = \lambda$.
\end{theorem}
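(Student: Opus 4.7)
The plan is to imitate application (2) of Section \ref{sec:app}, substituting Magidor's forcing for the vanilla Prikry forcing. Explicitly, set
\[
\P = \M(\kappa,\lambda) * \bigl(\Add(\kappa,\delta) * \dot{M}(\UU)\bigr),
\]
where $\dot{M}(\UU)$ is an $\Add(\kappa,\delta)$-name for Magidor's forcing (see \cite{M:cof}) built from a coherent sequence $\UU$ of normal measures on $\kappa$ of order type appropriate for changing $\cf(\kappa)$ to $\mu$. Such a sequence exists in $V[\Add(\kappa,\delta)]$ because $\kappa$ is Laver-indestructible supercompact in $V$ and $\Add(\kappa,\delta)$ is $\kappa$-directed closed; hence $\kappa$ retains enough supercompactness in $V[\Add(\kappa,\delta)]$ to support Magidor's construction.

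To apply Theorem \ref{th:improve} with $\dot{\Q} = \Add(\kappa,\delta) * \dot{M}(\UU)$, I need to verify two points. First, just as in application (2), the hypothesis $\cf(\delta)\ge\kappa^+$ gives $\Add(\kappa,\delta) \iso \Add(\kappa,\lambda) \x \Add(\kappa,\delta)$ in $V$, so $\dot{\Q}$ may be viewed as a forcing living in $V[\Add(\kappa,\lambda)]$. Second, $\Add(\kappa,\delta)$ is $\kappa^+$-cc by $\kappa^{<\kappa}=\kappa$, while Magidor's forcing is $\kappa^+$-cc in $V[\Add(\kappa,\delta)]$ since $\kappa$ remains a strong limit there (so the number of possible stems is at most $\kappa$); therefore $\dot{\Q}$ is $\kappa^+$-cc. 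Theorem \ref{th:improve} then yields $\TP(\kappa^{++})$ in $V[\P]$.

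The cardinal arithmetic follows from standard analyses of the three factors: $\M(\kappa,\lambda)$ collapses precisely the cardinals in $(\kappa^+,\lambda)$ and makes $\lambda=\kappa^{++}$; $\Add(\kappa,\delta)$ forces $2^\kappa=\delta$ while preserving all cardinals and cofinalities; $\dot{M}(\UU)$ preserves cardinals, leaves $2^\kappa=\delta$, and drives $\cf(\kappa)$ down to $\mu$. Since $\M(\kappa,\lambda)$ and $\Add(\kappa,\delta)$ are $\kappa$-closed and Magidor's forcing has the Prikry property (so adds no bounded subsets of $\kappa$), $\kappa$ remains a strong limit throughout, yielding the desired model. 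The main technical obstacle I foresee is confirming that Magidor's forcing retains all of its good behaviour in $V[\Add(\kappa,\delta)]$ rather than just in $V$ --- in particular the $\kappa^+$-cc, the Prikry property, and the existence of $\UU$ --- but all of this is routine given Laver-indestructibility of $\kappa$.
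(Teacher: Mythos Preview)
Your proposal is correct and matches the paper's own argument essentially line for line: the paper also takes $\P = \M(\kappa,\lambda) * (\Add(\kappa,\delta) * \text{Magidor}(\UU))$, invokes Laver-indestructibility to obtain the required Mitchell-increasing sequence of measures in $V[\Add(\kappa,\delta)]$, and applies Theorem~\ref{th:improve} with $\dot{\Q} = \Add(\kappa,\delta) * \text{Magidor}(\UU)$. One tiny remark: the isomorphism $\Add(\kappa,\delta) \cong \Add(\kappa,\lambda) \times \Add(\kappa,\delta)$ follows from $\delta \ge \lambda$, not from the cofinality hypothesis (which is used only to ensure $2^\kappa = \delta$).
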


To argue for the theorem, consider the forcing in (\ref{eq:cof}) but replace the vanilla Prikry forcing with the Magidor forcing (as defined in $V[\Add(\kappa,\delta)]$). Notice that the increasing Mitchell-sequence of normal measures on $\kappa$ of length $\mu$ required to define Magidor forcing exists in $V[\Add(\kappa,\delta)]$ by our assumption on Laver-indestructibility of $\kappa$ and this sequences has the desired properties also in $V[\M]$.

\end{enumerate}

\subsection{Generalized cardinal invariants}\label{sec:var}

Theorem \ref{th:improve} can be used to control certain combinatorial characteristics of $2^\kappa$ if they are controlled by the Cohen part $\Add(\kappa,\lambda)$ of the forcing $\M$ (and its variants). 

\begin{enumerate}[(1)]
\item Kunen's observation in Fact \ref{Kunen} with the lifting and quotient analysis as in Theorem \ref{th:improve} (applied just to $\Q$) readily implies the following:

\begin{theorem}
It is consistent from large cardinals that $2^\omega$ is weakly inaccessible, the tree property holds at $2^\omega$ and $\MA$ (Martin's axiom) holds.
\end{theorem}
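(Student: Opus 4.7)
The plan is to start with $\lambda$ weakly compact in $V$ and force with $\P = \Add(\omega,\lambda) * \dot{\Q}$, where $\dot{\Q}$ is the standard Solovay--Tennenbaum finite-support iteration of length $\lambda$ in $V[\Add(\omega,\lambda)]$ enumerating by bookkeeping all ccc forcings of size $<\lambda$, forcing $\MA$. Since $\P$ is ccc of size $\lambda$ in $V$, it preserves cardinals $\ge \lambda$ and preserves $\lambda$ as a regular limit cardinal, so $V[\P]$ satisfies $\MA$ with $2^\omega = \lambda$ and $2^\omega$ weakly inaccessible.

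For $\TP(\lambda)$ in $V[\P]$, I would adapt the lifting and quotient analysis of Theorem \ref{th:improve} directly to $\P$ (``to $\Q$'' in the authors' wording, i.e., skipping the Mitchell component and its associated $\kappa^+$-closed term forcing). Fix a weak compactness embedding $j \colon M \to N$ in $V$ with critical point $\lambda$, $|M| = \lambda$, $M^{<\lambda} \subseteq M$, with $\P$ and a name for any designated target $\lambda$-tree $T$ in $M$. By Fact \ref{fact:regular}, $\restr{j}{\P}$ is an $(M,N)$-regular embedding, so the $V$-generic $G$ on $\P$ extends (inside $V[G]$, by the usual counting argument using $|N|\le\lambda$ and ccc of the quotient) to a $j(\P)$-generic $G^*$ over $N$, lifting $j$ to $j \colon M[G] \to N[G^*] = N[G][G_Q]$, with $Q = j(\P)/G$ ccc in $N[G]$. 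The lift places at level $\lambda$ of $j(T)$ a cofinal branch $b$ through $T$ inside $N[G][G_Q]$; it suffices to show $b \in N[G] \subseteq V[G]$.

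This is where Kunen's observation (Fact \ref{Kunen}(ii)) applies: by ccc of $Q$, any $Q$-name for a new cofinal branch through $T$ produces a subtree skeleton $S \subseteq T$ of height $\lambda$ with countable levels lying in $N[G]$. If $S$ has a cofinal branch in $N[G]$, then $S \subseteq T$ yields a branch through $T$ in $N[G] \subseteq V[G]$, contradicting $T$ being Aronszajn. The main technical obstacle is exactly the verification that $S$ has such a branch in $N[G]$ -- equivalently, that $\lambda$ retains the tree property inside $N[G]$. This is the standard preservation of $\TP(\lambda)$ under ccc forcings of size $\le \lambda$ over a weakly compact $\lambda$ (Silver/Kunen), which is itself an application of the same lifting/quotient method, and applies inside $N$ since $\P \in N$ is ccc of size $\lambda$ in $N$ and $\lambda$ is weakly compact in $N$ by elementarity of $j$.
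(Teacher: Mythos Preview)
Your overall strategy coincides with the paper's: force $\MA$ over a weakly compact $\lambda$ via a ccc iteration, lift a weakly compact embedding $j:M\to N$ through the forcing, and argue that the ccc quotient $j(\P)/G$ cannot add a cofinal branch to a putative $\lambda$-Aronszajn tree $T$ living in $N[G]$. (The preliminary $\Add(\omega,\lambda)$ factor you insert is unnecessary but harmless.)

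The gap is in your final paragraph. You reduce the branch-preservation to ``$\lambda$ retains the tree property inside $N[G]$'' and then assert that ``$\lambda$ is weakly compact in $N$ by elementarity of $j$''. This is incorrect: elementarity of $j:M\to N$ transfers first-order properties of $\lambda$ in $M$ to properties of $j(\lambda)$ in $N$, not to $\lambda$ itself in $N$. There is no reason for $\lambda$ to be weakly compact in $N$; if, say, $\lambda$ is the least weakly compact cardinal in $V$, then $V_\lambda\sub N$ witnesses that $N$ sees no weakly compacts below $\lambda$, and the single embedding $j\in N$ does not make $\lambda$ weakly compact in $N$. Invoking the Silver--Kunen preservation theorem \emph{inside} $N$ is therefore unfounded, and invoking it in $V$ would be exactly what you are trying to prove.

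The fix --- and what the paper intends by citing Fact~\ref{Kunen} --- is that Kunen's observation holds for \emph{every} regular uncountable $\lambda$, not only for successors: if $T$ is a well-pruned $\lambda$-tree all of whose levels have size $<\mu$ for some fixed $\mu<\lambda$, then $T$ has a cofinal branch. The Fodor argument goes through verbatim once one works on the stationary set of $\alpha<\lambda$ with $\cf(\alpha)\ge\mu$: the fewer-than-$\mu$ nodes at level $\alpha$ determine fewer-than-$\mu$ pairwise-splitting levels below $\alpha$, hence a bound $\beta(\alpha)<\alpha$; pressing down yields a fixed $\beta^*$ above which the tree is a disjoint union of chains on a cofinal set of levels, producing a cofinal branch. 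Applied with $\mu=\aleph_1<\lambda$, this shows directly that your subtree $S$ (countable levels, height $\lambda$) has a cofinal branch in $N[G]$ --- a purely combinatorial fact requiring neither $\TP(\lambda)$ nor any weak compactness of $\lambda$ inside $N$.
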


\begin{proof}
Assume $\lambda$ is weakly compact, and therefore it satisfies the tree property. Let $\Q$ be the standard finite-support ccc iteration of length $\lambda$ for forcing $2^\omega = \lambda$ and $\MA$. By the lifting argument as in Theorem \ref{th:improve} (applied with a weakly compact embedding which suffices if $\Q$ is included in $V_\lambda$), it follows easily that every $\lambda$-tree has a cofinal branch in $V[\Q]$, and therefore $\lambda$ has the tree property in $V[\Q]$.
\end{proof}

\item Recall that if $\kappa$ is an infinite cardinal, then $\uu$ is the least cardinal such that there is a uniform ultrafilter on $\kappa$ with base of size $\uu$ ($\uu$ makes sense also for singular cardinals, see \cite{GS:small}, and its size is always at least $\kappa^+$). Using certain forcings $\dot{\Q}$ -- which appear in \cite{GS:pol, GS:small, DS:graph, F:u} and which are $\kappa^+$-cc --, one can use Theorem \ref{th:improve} to construct models in which the tree property holds at $\kappa^{++}$, with $2^\kappa> \kappa^+$, $\uu = \kappa^+$, and where $\kappa$ is either inaccessible (in fact supercompact), or singular strong limit cardinal with cofinality $\omega$ (the singular case requires some extra work in addition to methods of Theorem \ref{th:improve}). These results are stated with more details in a separate paper \cite{HS:both-u}.
\end{enumerate}

\section{More indestructibility}\label{sec:more}

In this section, we will use the Laver function $f^L: \lambda \to H(\lambda)$ to guess all possible $\kappa^+$-closed forcing notions: this will improve the degree of indestructibility in the final model. In order to state the theorem with a bit more generality, let us introduce a definition of a $\lambda$-liftable forcing notion.\footnote{We thank James Cummings for a valuable discussion regarding this definition.}

Before we state the definition, recall that if $\lambda \le \theta$ are regular cardinals, we can define the notion of a closed unbounded set in $[H(\theta)]^{<\lambda}$: we say that $C \sub [H(\theta)]^{<\lambda}$ is closed unbounded if it is unbounded in the inclusion relation and the union of every $\sub$-increasing sequence of elements of $C$ of length $<\lambda$ is in $C$. Notice that if $|H(\theta)| = \theta$ (which will be the case for us), we can translate these concepts directly to the system $P_\lambda(\theta) = [\theta]^{<\lambda}$: Fix a bijection $f:\theta \to H(\theta)$. Then $C^*$ is closed unbounded in $P_\lambda(\theta)$ if and only if $C = \set{f''x}{x \in C^*}$ is closed unbounded in $[H(\theta)]^{<\lambda}$. Suppose $U$ is a normal measure on $P_\lambda(\theta)$, $j: V \to M$ is the derived elementary embedding, and $C^*$ is closed unbounded in $P_\lambda(\theta)$. Since $U$ extends the filter generated by closed unbounded sets, it contains $C^*$ and by the standard properties of the ultrapower of $V$ by $U$, $j''\theta \in j(C^*)$, but also \beq \label{jf} j(f)''(j''\theta) = j''H(\theta) \in j(C).\eeq
The property (\ref{jf}) will be useful in what follows.

\begin{definition}\label{def:master}
Let $\lambda$ be a regular cardinal and $\P$ a forcing notion. We say that $\P$ is $\lambda$-\emph{liftable} if $\P$ is $\lambda$-distributive and for a sufficiently large regular $\theta$ with $\P \in H(\theta)$ there is a closed unbounded set $C$ in $[H(\theta)]^{<\lambda}$ of elementary substructures $N \el H(\theta)$ which contain $\P$ and satisfy the following condition:
\bce[(*)]
\item Let $\pi_N: N \to \bar{N}$ denote the transitive collapse. For every $\pi_N(\P)$-generic filter $\bar{G} \in V$  over $\bar{N}$ there is a condition $p_N \in \P$ such that \beq \label{eq:master} p_N \Vdash_\P \pi^{-1}_N{''}\bar{G} \sub \dot{G}_\P,\eeq where $\dot{G}_\P$ is a name for a $\P$-generic filter. We call $p_N$ a master condition.
\ece
\end{definition}

Let us make a few comments regarding the definition. First note that (\ref{eq:master}) is for separative forcing notions equivalent to $p_N$ being a lower bound of $\pi_N^{-1}{''}\bar{G}$. Let us mention that the condition regarding the closed unbounded set $C$ is relevant only if there are some generic filters $\bar{G}\in V$ for the collapsed structures $\bar{N}$. In the context of the intended applications in which $\lambda$ is a critical point of a (generic) elementary embedding, there will be many such structures, but in general this may not be the case.  Also note that we explicitly require as a part of the definition that $\P$ is $\lambda$-distributive (if $\lambda$ is a critical point of a (generic) elementary embedding, this will again follow for free).

The intuition behind Definition \ref{def:master} is to capture a uniform combinatorial property of having a master condition which applies to all $\lambda$-directed closed forcing notions, but also to other forcing notions such as the generalized Sacks forcing at $\lambda$ (it is known that it is not $\lambda$-directed closed).\footnote{See \cite{KANAMORIperfect} for the definition of the generalized Sacks forcing.} As we will see in the proof of Theorem \ref{th:main}(\ref{e}), being liftable is strong enough to carry out a master condition argument and argue that $\lambda$-liftable forcings preserve the tree property at $\lambda$.

However, we should emphasize that $\P$ being $\lambda$-liftable does not by itself ensure that any elementary embedding $j: V^* \to M^*$ (for some models $V^*,M^*$) with critical point $\lambda$ lifts through a $\P$-generic filter $G$ over $V^*$: Typically, it is necessary that $j''G$ is an element of $M^*$ because then the $j(\lambda)$-liftability of $j(\P)$ (given by elementarity) ensures that there is a $j(\P)$-generic filter which extends $j''G$, and thus $j$ can lift.\footnote{Recall that a single Cohen subset of $\lambda$ can destroy supercompactness of $\lambda$, so even $\lambda$-directed closure is not sufficient for lifting unless we perform some preparation.} See the proof of Theorem \ref{th:main}(\ref{e}) for more details.

\brm \label{rm:complete}
We discovered,\footnote{We thank M.~Habic for pointing out this connection to us.} after we formulated the definition of a $\lambda$-liftable forcing notion, that it bears resemblance to the notion of a \emph{complete} forcing notion (for $\lambda = \omega_1$) which was introduced by Shelah (see \cite[Chapter V]{SHELAHproper}) and which is relevant for lifting of embeddings in the context of countable models and proper forcings which do not add reals. It is an interesting question to what extent the notions of \emph{complete} and also \emph{subcomplete} forcing notions are relevant for $\lambda > \omega_1$ and whether they can be characterized by other means (analogously to Jensen's result who showed that the class of complete forcing notions is exactly the class of forcing notions which densely embed an $\omega_1$-closed forcing notion; see \cite{Singapore} which contains a review of the complete and subcomplete forcing notions).

\erm

Let us give a few examples of $\lambda$-liftable forcing notions. 

\begin{definition}\label{def:wellmet}
Let us say that a forcing notion $\P$ is \emph{well-met $\lambda$-closed with glb} if (i) any two compatible conditions $p,q$ in $\P$ have the greatest lower bound (glb) which we denote by $p \wedge q$, and (ii) any decreasing sequence of elements $\seq{p_\alpha}{\alpha < \delta}$ of length $\delta <\lambda$ has glb which we denote $\bigwedge_{\alpha<\delta}p_\alpha$. 
\end{definition}

For instance the generalized Sacks forcing at $\lambda$ (both product and iteration) is well-met $\lambda$-closed with glb.\footnote{Strictly speaking, this depends on the presentation of the forcing. The presentation of the generalized Sacks forcing at $\lambda$ in \cite{KANAMORIperfect} is not well-met, but there is an equivalent presentation which is well-met and $\lambda$-closed with glb: define a condition as a tree which contains a perfect tree according to the definition in \cite{KANAMORIperfect} (the usual presentation is therefore dense in this ordering). It is straightforward to check that $p \cap q$ is the greatest lower bound of compatible conditions $p,q$ and $\bigcap_{\alpha < \beta}p_\alpha$ is the greatest lower bound for a decreasing sequence of conditions of length $\beta <\lambda$.}

\begin{lemma} 
\bce[(i)]
\item All $\lambda$-directed closed forcings are $\lambda$-liftable.
\item All forcings which are well-met $\lambda$-closed with glb are $\lambda$-liftable.
\ece
\end{lemma}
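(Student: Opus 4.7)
In both parts, fix a sufficiently large regular $\theta$ and take $C \sub [H(\theta)]^{<\lambda}$ to be the set of elementary substructures $N \el H(\theta)$ with $|N| < \lambda$, $\P, \lambda \in N$, and $N \cap \lambda \in \lambda$; this is readily checked to be closed under $<\lambda$-increasing unions and unbounded in $\sub$, hence a club in the sense of the paper. For $N \in C$ with transitive collapse $\pi_N: N \to \bar{N}$ and a $\pi_N(\P)$-generic $\bar{G} \in V$ over $\bar{N}$, put $X := \pi^{-1}_N{''}\bar{G}$. Since $\pi^{-1}_N$ is an order-isomorphism and $\bar{G}$ is a filter, $X$ is a directed subset of $\P$ of cardinality at most $|\bar{N}| < \lambda$; the task is to produce a master condition $p_N \in \P$ below every element of $X$. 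The $\lambda$-distributivity requirement of Definition~\ref{def:master} holds in both cases, since $\lambda$-directed closure and $\lambda$-closure with glb both entail $\lambda$-distributivity.

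For part (i), $\lambda$-directed closure directly supplies a lower bound $p_N$ of $X$, and by upward closure of the generic filter $p_N \Vdash X \sub \dot{G}_\P$. For part (ii), I plan to reduce $X$ to a decreasing cofinal sequence in $\bar{G}$ of length $<\lambda$ and transfer it to $\P$. Working inside $\bar{N}[\bar{G}]$, fix a well-ordering of $\pi_N(\P)$ in $\bar{N}$ and recursively construct a decreasing sequence $\langle \bar{r}_\alpha : \alpha < \bar{\eta}\rangle$ in $\bar{G}$ which is cofinal in $\bar{G}$: at successor stages, use directedness of $\bar{G}$ to pick $\bar{r}_{\alpha+1}$ below both $\bar{r}_\alpha$ and the $\alpha$-th enumerated element of $\bar{G}$; at a limit $\alpha < \pi_N(\lambda)$, the initial segment $\langle \bar{r}_\beta : \beta<\alpha\rangle$ lies in $\bar{N}$ by $\pi_N(\lambda)$-distributivity of $\pi_N(\P)$ over $\bar{N}$, so in $\bar{N}$ its glb $\bar{s}_\alpha$ exists in $\pi_N(\P)\cap\bar{N}$ by $\pi_N(\lambda)$-closure with glb, and a density argument (the set of conditions either below $\bar{s}_\alpha$ or incompatible with some $\bar{r}_\beta$ is dense in $\pi_N(\P)$ in $\bar{N}$, by well-met-ness together with $\pi_N(\lambda)$-closure) yields some $\bar{r}_\alpha \in \bar{G}$ with $\bar{r}_\alpha \leq \bar{s}_\alpha$. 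Setting $r_\alpha := \pi^{-1}_N(\bar{r}_\alpha)$ gives a decreasing sequence in $\P$ of length $\bar{\eta} < \lambda$, whose glb $p_N := \bigwedge_\alpha r_\alpha$ exists in $\P$ by $\lambda$-closure with glb in $V$; cofinality of $\langle \bar{r}_\alpha\rangle$ in $\bar{G}$ then forces $p_N \leq \pi^{-1}_N(\bar{g})$ for every $\bar{g} \in \bar{G}$, so $p_N$ is a master condition.

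The main technical obstacle in (ii) is to ensure that the limit-step argument covers all $\alpha < \bar{\eta}$: if $\bar{\eta} > \pi_N(\lambda)$ then at $\alpha = \pi_N(\lambda)$ the initial segment of the constructed sequence may fall outside $\bar{N}$, and $\pi_N(\lambda)$-closure of $\pi_N(\P)$ in $\bar{N}$ no longer applies directly. I would address this by refining $C$ so as to impose a cardinality gap ensuring the enumeration of $\bar{G}$ has length at most $\pi_N(\lambda)$ (for instance, by restricting to $N$ with $|N| \leq N \cap \lambda$, so that $\bar{\eta} \leq |\bar{N}| \leq \pi_N(\lambda)$, forcing the construction to terminate at or before stage $\pi_N(\lambda)$). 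Verifying that this refined collection still contains a closed unbounded subset of $[H(\theta)]^{<\lambda}$ in the paper's sense is the delicate point; it can be arranged via a careful Löwenheim–Skolem construction controlling the size of $N$ relative to the ordinal $N \cap \lambda$, or alternatively by a fusion-style treatment of the late limit stages that leverages $\P$'s full $\lambda$-closure in $V$ rather than $\bar{N}$'s weaker closure.
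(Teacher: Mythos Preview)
Your argument for (i) matches the paper's exactly.

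For (ii), your approach is correct in outline (and your proposed refinement $|N|\le N\cap\lambda$ does give a club and does resolve the obstacle you flag), but the paper takes a considerably simpler route that sidesteps the whole issue. Rather than working inside $\bar N[\bar G]$ and relying on $\pi_N(\lambda)$-closure and distributivity of $\pi_N(\P)$ in $\bar N$, the paper works directly in $\P$ in $V$, where full $\lambda$-closure with glb is available. It enumerates $\pi_N^{-1}{''}\bar G$ as $\langle q_\alpha:\alpha<\mu\rangle$ with $\mu<\lambda$ and builds a decreasing sequence $\langle p_\alpha:\alpha<\mu\rangle$ in $\P$ maintaining the invariant that $p_\alpha$ is compatible with every element of $\pi_N^{-1}{''}\bar G$: at successors set $p_{\beta+1}=p_\beta\wedge q_{\beta+1}$, at limits set $p_\alpha=(\bigwedge_{\beta<\alpha}p_\beta)\wedge q_\alpha$. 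The compatibility invariant is preserved because for any $q\in\pi_N^{-1}{''}\bar G$ one has $q\wedge q_\alpha\in\pi_N^{-1}{''}\bar G$ (filters are closed under the well-met operation), and then the induction hypothesis plus the glb property do the rest. The master condition is $\bigwedge_{\alpha<\mu}p_\alpha$.

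The payoff of the paper's approach is that it works for \emph{every} $N\prec H(\theta)$ containing $\P$, with no need to control $|N|$ versus $N\cap\lambda$, and it never appeals to distributivity or genericity of $\bar G$ beyond its being a filter. Your approach, by contrast, leans on the internal structure of $\bar N$ and forces you to thin the club; it works, but the detour through $\bar N[\bar G]$ is unnecessary once you realize the entire construction can live in $V$.
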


\begin{proof} (i) We show that all submodels $N$ satisfy the property in Definition \ref{def:master}.  If $\bar{N}$ is any structure as in Definition \ref{def:master}, $\pi^{-1}_N{''}\bar{G}$ is a directed set of condition in $\P$ of size $<\lambda$ and therefore has a lower bound.

  (ii). We show that all submodels $N$ satisfy the property in Definition \ref{def:master}.  Let $\bar{N}$ be any structure as in Definition \ref{def:master} and let $\bar{G}$ be $\pi_N(\P)$-generic over $\bar{N}$; let us denote $\pi_N$ by $\pi$ and $|\pi(\P)|$ by $\mu$. Let us fix an enumeration $\seq{q_\alpha}{\alpha<\mu}$ of $\pi^{-1}{''}\bar{G}$. We will construct by induction a decreasing sequence of elements $\seq{p_\alpha}{\alpha<\mu}$ in $\P$ which satisfies \beq \label{well-met} \forall \alpha < \mu \; p_\alpha \comp \pi^{-1}{''}\bar{G},\eeq where $p_\alpha \comp \pi^{-1}{''}\bar{G}$ means that $p_\alpha$ is compatible with every element of $\pi^{-1}{''}\bar{G}$. In addition, we will make sure that every element of $\pi^{-1}{''}\bar{G}$ is eventually above some element in $\seq{p_\alpha}{\alpha<\mu}$.

Assume $\seq{p_\beta}{\beta<\alpha}$ is constructed and (\ref{well-met}) holds for all $p_\beta$, $\beta < \alpha$. We describe the construction of $p_\alpha$.

\emph{Successor stage $\alpha = \beta+1$.} Let $p_{\beta+1} = p_\beta \wedge q_{\beta+1}$ (this is correctly defined since by the induction assumption $p_\beta$ is compatible with every element in $\pi^{-1}{''}G$). We need to check that $p_{\beta+1}$ is compatible with $\pi^{-1}{''}G$. Let us fix any $q = \pi^{-1}(\bar{p})$, $\bar{p} \in G$. Since $q \wedge q_{\beta+1} \in \pi^{-1}{''}\bar{G}$, $p_\beta$ is compatible with $q \wedge q_{\beta+1}$ by the induction assumption (and $p_\beta \wedge q_{\beta+1} \wedge q$ is the greatest lower bound).

\emph{Limit stage $\alpha$.} Let us first set $p'_\alpha = \bigwedge_{\beta<\alpha} p_\beta$, and then $p_\alpha = p'_\alpha \wedge q_\alpha$ (this is correctly defined since $q_\alpha$ is by induction compatible with every $p_\beta$, $\beta<\alpha$, and therefore must be compatible with $p'_\alpha$). Following the argument for the successor stage with the same notation, since $q \wedge q_\alpha \in \pi^{-1}{''}\bar{G}$, $p_\alpha'$ is compatible with $q \wedge q_\alpha$ (otherwise some $p_\beta$, $\beta < \alpha$, would not be compatible $q \wedge q_\alpha$ contradicting the induction assumption), with $p_\alpha' \wedge q_\alpha \wedge q$ being the greatest lower bound.

Let $\seq{p_\alpha}{\alpha<\mu}$ be the final sequence. By construction, for each $\alpha < \mu$, $p_\alpha \le q_\alpha$, and therefore it follows that $p_N = \bigwedge_{\alpha < \mu}p_\alpha$ is a master condition in the sense of (\ref{eq:master}) as desired.
\end{proof}

\brm Definition \ref{def:master} also applies in the context of Laver indestructibility of supercompactness: by preparing for all $\alpha$-liftable forcing notions (which are also $\alpha$-strategically closed to ensure sufficient distributivity of the tails of the Laver preparation) below a supercompact $\lambda$, one can get indestructibility of supercompactness for more forcing notions (such the generalized Sacks forcing at $\lambda$). However, note that $\lambda$-liftable forcing notions cannot include all $\lambda$-closed forcing notions because it is known that there are $\lambda$-closed forcing notions which can destroy weak compactness (such as the forcing for adding a $\lambda$-Kurepa tree\footnote{See \cite{CUMhandbook} for more details about this forcing; notice that the forcing is not well-met.}). Compare also with Jensen's result mentioned in Remark \ref{rm:complete}.
\erm

\brm
There are forcing  notions which can be lifted using a master condition argument, but are not necessarily $\lambda$-liftable. The point is that being $\lambda$-liftable means that \emph{any} normal ultrafilter on $P_\lambda(\theta)$ (in some outer model of $V$) contains the closed unbounded set of substructures (modulo some bijection between $H(\theta)$ and $\theta$) mentioned in Definition \ref{def:master}; this allows a uniform statement of the definition. In certain situations it is possible to choose a normal ultrafilter which contains the set of substructures mentioned in Definition \ref{def:master} even when the set is just stationary (an example is the forcing -- in the context a normal measure on $\lambda$, or equivalently on $P_\lambda(\lambda)$ -- for shooting a club through a suitable stationary set $S \sub \lambda$; if $C$ is the generic club through $\lambda$ contained in $S$ then $C \cup \{\lambda\}$ is a legitimate condition for shooting a club through $j(S)$, and hence a master condition, whenever $S$ is in the normal measure on $\lambda$).
\erm

We will prove the following theorem:

\begin{theorem}\label{th:main}
$\GCH$. Assume $\omega \le \kappa < \lambda$ are cardinals, $\kappa^{<\kappa} = \kappa$ and $\lambda$ is supercompact. Let $\R$ be the forcing from Definition \ref{def:R}. Suppose $\Q \in V[\R]$ satisfies any of the following conditions:
\bce[(a)]
\item \label{a} $\Q$ lives in $V[\R]$ and is $\kappa^+$-cc with size at most $\kappa^+$.
\item \label{b} $\Q$ lives in $V[\Add(\kappa,\lambda)]$ and is $\kappa^+$-cc in $V[\Add(\kappa,\lambda)]$ (equivalently $\kappa^+$-cc in $V[\R]$).
\item \label{c} $\Q$ lives in $V[\R]$ and is $\kappa^+$-distributive with size at most $\kappa^+$.
\item \label{e} $\Q$ lives in $V[\R]$ and is $\kappa^+$-closed and $\kappa^{++}$-liftable.
\item \label{f} $\Q$ lives in $V[\R]$ and is $\kappa^{+++}$-distributive in $V[\R]$.
\ece
Then $$V[\R * \dot{\Q}] \models \TP(\kappa^{++}).$$
In other words, the tree property at $\kappa^{++}$ is indestructible under all forcings $\Q$ listed in (a)--(e).
\end{theorem}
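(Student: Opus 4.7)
The proof runs through cases (a), (b), (c), (e), (f) by a common lifting-plus-quotient template modelled on Theorem~\ref{th:improve}: fix a supercompact embedding $j: V \to N$ with critical point $\lambda$ and sufficient closure of $N$, lift $j$ through an $\R * \dot{\Q}$-generic $G * h$ to obtain $j: V[G][h] \to N[G^*][h^*]$, and derive a contradiction from a hypothetical $\kappa^{++}$-Aronszajn tree $T \in V[G][h]$ by observing that $j$ produces a cofinal branch through $T$ in $N[G^*][h^*]$ while the quotient forcing from $V[G][h]$ to $N[G^*][h^*]$ can be shown not to add such a branch. Case~(f) is immediate because a $\kappa^{+++}$-distributive $\Q$ adds no new $\kappa^{++}$-sequences of ordinals, hence no new $\kappa^{++}$-trees or cofinal branches, so $\TP(\kappa^{++})^{V[\R]}$ (which holds by the Abraham-style product analysis applied to $\R$, see the discussion after Fact~\ref{lm:Laver-cc}) passes directly to $V[\R * \dot{\Q}]$.

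\textbf{Cases (b), (a), (c).} Case~(b) is a transcription of the proof of Theorem~\ref{th:improve} with $\R$ replacing $\M$: the product analysis~(\ref{eq:q}) supplies a projection from $R^0 \x R^1$ onto $\R$ with $R^0 = \P'$ Knaster and $R^1$ closed, the restriction of $j$ to $(R^0 * \dot{\Q}) \x R^1$ satisfies the hypothesis of Fact~\ref{fact:regular}(ii), and exactly as in Theorem~\ref{th:improve} one exhibits a projection onto $j(\R * \dot{\Q})/(G * h)$ from $j(R^0 * \dot{\Q})/(G^0 * h) \x R^1_\lambda$, concluding via Facts~\ref{Kunen}(ii) and~\ref{f:spencer}. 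For the small cases (a) and (c) the key point is that $|\Q|^{V[\R]} \le \kappa^+ < \lambda$, so after identifying $\Q$ with a subset of $\lambda$ we have $j(\Q) = \Q$ and $j''h = h$. In case~(a) a nice-name argument using the $\lambda$-cc of $\R$ (Fact~\ref{lm:Laver-cc}) together with the regularity of $\lambda$ yields $\Q \in V[\R_\alpha]$ for some $\alpha < \lambda$, so the product analysis can be relativised to the tail of $\R$ above $\alpha$ and the same Facts~\ref{Kunen}(ii), \ref{f:spencer} forbid a new branch through $T$. In case~(c), the $\kappa^+$-distributivity of $\Q$ combined with Fact~\ref{f:spencer} prevents the closed part of $j(\R)/G$ from introducing new maximal antichains of $\Q$, so $h$ itself is a master set for the lift, and the remaining quotient analysis proceeds as in~(b).

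\textbf{Case (e) and the main obstacle.} Using the Laver property of $f^L$, choose $j$ additionally so that $j(f^L)(\lambda) = \dot{\Q}^*$, the canonical $\R$-name for $\Q$; since $\dot{\Q}^*$ is forced to be $\kappa^+$-closed we have $\lambda \in j(\A)$, and Definition~\ref{def:R} provides a canonical regular embedding of $\R * \dot{\Q}^*$ into $j(\R)$ that places $\Q$ at the $\lambda$-th third-coordinate slot. Given $\R * \dot{\Q}$-generic $G * h$, choose a $j(\R)$-generic $G^*$ over $N$ whose pullback under this embedding is $G * h$; this lifts $j$ to $V[G] \to N[G^*]$. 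For the further lift through $\Q$, apply $\kappa^{++}$-liftability of $\Q$: for sufficiently large regular $\theta$, the image $j''H(\theta)^{V[G]}$ has size ${<}j(\kappa^{++})$ in $N[G^*]$ and lies in $j(C)$ by property~(\ref{jf}), where $C$ is the club of Definition~\ref{def:master}; this yields a master condition $p \in j(\Q)$ forcing $j''h = h$ into the generic, and extending $h$ to a $j(\Q)$-generic $h^*$ through $p$ gives $j: V[G][h] \to N[G^*][h^*]$. The main obstacle is the concluding quotient analysis, where one must show that the forcing $j(\R)/(\R * \dot{\Q}^*) * j(\dot{\Q})$ taking $V[G * h]$ to $N[G^*][h^*]$ cannot add a cofinal branch through $T$. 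This is obtained by applying the product analysis~(\ref{eq:q}) at level $\lambda+1$ of $j(\R)$ to dominate $j(\R)/(\R * \dot{\Q}^*)$ by $R^0_{\lambda+1} \x R^1_{\lambda+1}$, absorbing $j(\dot{\Q})$ into the $\kappa^+$-closed factor $R^1_{\lambda+1}$ using its $\kappa^+$-closure, and finishing via Facts~\ref{Kunen}(ii) and~\ref{f:spencer} as in Theorem~\ref{th:improve}.
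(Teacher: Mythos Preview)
Your treatment of cases (\ref{b}), (\ref{e}), and (\ref{f}) is essentially the paper's. In (\ref{e}) your phrase ``absorbing $j(\dot{\Q})$ into the $\kappa^+$-closed factor'' is the term-forcing trick that the paper leaves implicit in its one-line ``using the $\kappa^+$-closure of $j(\Q)$''; note, however, that $j(\Q)$ is $\kappa^+$-closed only in $M[G^*]$, not in $M[G][g]$, so the absorption must go through a term-forcing over $M[G][g]$ rather than a literal product---you should make that explicit.

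Cases (\ref{a}) and (\ref{c}) are where you diverge from the paper, and here there is a genuine gap. The paper's device for both small cases is to choose $j$ so that $j(f^L)(\lambda)$ is the \emph{trivial} forcing. Since $|\Q|\le\kappa^+$ and (under $\GCH$ with $\lambda$ inaccessible) the nice name $\dot{\Q}$ lies in $H(\lambda)$, one has $j(\dot{\Q})=\dot{\Q}$ and $j''g=g$, so the lift through $\Q$ is automatic. Crucially, because the stage-$\lambda$ Laver step is trivial, all three forcings $R^0_{\lambda+1}$, $R^1_{\lambda+1}$, $\dot{\Q}^G$ live in $M[G]$, so one can take a \emph{product} generic $H^0\times H^1\times g$ and freely permute to obtain $M[G][g][H^0][H^1]$. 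Case (\ref{a}) then finishes via Facts~\ref{Kunen} and~\ref{f:spencer}; case (\ref{c}) uses additionally that $\kappa^+$-distributivity of $\Q$ preserves the $\kappa^+$-closure of $R^1_{\lambda+1}$ and the $\kappa^+$-Knasterness of $R^0_{\lambda+1}$ over $M[G][g]$.

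Your route for (\ref{a})---reducing to $\Q\in V[\R_\alpha]$ and relativising to the tail---can probably be made to work, but it is more delicate than you indicate: you must re-order the iteration as $\R_\alpha * (\dot{\Q}\times (\R/\R_\alpha))$ and then carry out the full lifting-and-quotient analysis for $\R/\R_\alpha$ in the presence of the extra factor $\Q$, which you do not do. Your argument for (\ref{c}) is not correct as written: Fact~\ref{f:spencer} concerns cofinal branches in trees and says nothing about maximal antichains of $\Q$, so it cannot be invoked to show ``$h$ itself is a master set for the lift.'' The issue you are gesturing at---that $g$ should be $\Q$-generic over $N[G^*]$---is a real one, but the paper sidesteps it entirely by pulling $g$ from a $j((R^0\times R^1)*\dot{\Q})$-generic rather than trying to verify genericity after the fact.
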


\begin{proof}
We first prove items (\ref{b},\ref{e},\ref{f}), leaving (\ref{a},\ref{c}) for the end.

(\ref{b}) It is easy to check that the analysis in the proof of Theorem \ref{th:improve} applies to $\R$ instead of $\M$. The key point is the existence of the projection from the product $R^0 \x R^1$ onto $\R$, with $R^0$ being isomorphic to $\Add(\kappa,\lambda)$ and $R^1$ being a $\kappa^+$-closed term forcing, and the existence of the quotient projection from Claim \ref{pi}. Notice that for this case, $\lambda$ may be just weakly compact.

(\ref{e}) For this case, we really need that $\lambda$ is supercompact. Let $\theta$ be a regular cardinal greater or equal to $|(R^0 \x R^1) * \dot{\Q}|$, where $R^0$ and $R^1$ are specified below Definition \ref{def:R}. Choose $\theta$ large enough so that for any $\R$-generic $G$, $(\dot{\Q})^G$ is $\lambda$-liftable in $V[G]$ for this $\theta$ in the sense of Definition \ref{def:master}. Choose $j: V \to M$ with critical point $\lambda$ so that $M$ is closed under $\theta$-sequences; ensure moreover that $j(f^L)(\lambda) = \dot{\Q}$. Let $\tilde{G}^0 \x \tilde{G}^1$ be $j(R^0 \x R^1 )$-generic over $V$. In $V[\tilde{G}^0 \x \tilde{G}^1]$, $j$ lifts to \beq j: V[G] \to M[G^*] = M[G][g][H],\eeq where $G$ is $\R$-generic, $g$ is $\dot{\Q}^G$-generic and $H$ is $j(\R)/(G*g)$-generic over $M[G][g]$. This follows by our definition of $\R$ which ensures that in $j(\R)$ only the forcing $\dot{\Q}$ appears at stage $\lambda$.

In the next step, we wish to lift $j$ further to $V[G][g]$ mimicking the usual master condition argument using the abstract criterion of $\Q = (\dot{\Q})^G$ being $\lambda = (\kappa^{++})^{V[G]}$-liftable in $V[G]$. By elementarity, this implies \beq \label{eq:j(Q)} \mbox{$j(\Q)$ is $j(\lambda)$-liftable in $M[G][g][H]$.}\eeq We will work in $M[G][g][H]$ now, aiming to use (\ref{eq:j(Q)}). By our assumptions, $\Q$ is an element of $\bar{N} = H(\theta)^{V[G]} = H(\theta)^{M[G]}$. Consider $N = j''\bar{N}$; this structure is an element of $M[G][g][H]$ and contains $j(\Q)$ as an element ($N$ is an element of $M[G][g][H]$ because modulo a bijection $f$ in $V[G]$ between $H(\theta)^{V[G]}$ and $\theta$, it is expressible as $j''\theta$). It follows by the uniqueness of the transitive collapse that $j^{-1}$ restricted to $N$ is in $M[G][g][H]$ the transitive collapse of $N$ to $\bar{N}$ which maps $j(\Q)$ to $\Q$. By $\lambda$-liftability of $\Q$ in $V[G]$, there is a closed unbounded set $C$ of substructures in $[H(\theta)]^{<\lambda}$ of $V[G]$ which have a master condition in the sense of Definition \ref{def:master} with respect to $\Q$. Since $N$ is in $j(C)$ by the analysis in (\ref{jf}) applied in $V[G]$ with a bijection $f$ in $V[G]$, we conclude that $N$ is a substructure to which (*) of Definition \ref{def:master} applies with respect to $j(\Q)$. Since both $g$ and $j''g$ are elements of $M[G][g][H]$, and $\restr{j^{-1}}{N}$ is the transitive collapse map of $N$ to $\bar{N}$, we conclude by (\ref{eq:j(Q)}) that $j''g$ has a lower bound in $j(\Q)$: i.e., there is some $p_N $ in $j(\Q)$ which forces $j''g$ into the generic filter for $j(\Q)$.

Force over $V[\tilde{G}^0 \x \tilde{G}^1]$ to obtain a generic filter $h$ for $j(\Q)$ below the condition $p_N$. Now $j$ lifts in $V[\tilde{G}^0 \x \tilde{G}^1][h]$ further to \beq j: V[G][g] \to M[G][g][H][h],\eeq and we can finish the argument in the standard way, arguing -- using the $\kappa^+$-closure of $j(\Q)$ -- that the generic $H*h$ does not add cofinal branches to a hypothetical $\lambda$-Aronszajn tree in $M[G][g]$.

(\ref{f}) We state this case just for completeness. Such forcings do not add any new $\kappa^{++}$-trees, and therefore preserve the tree property at $\kappa^{++}$ over $V[\R]$.

We will indicate how to modify the construction to deal with the small forcings in (\ref{a},\ref{c}). The argument is similar to Unger's in \cite{UNGER:1} extended to deal with an uncountable regular $\kappa$ and the extra case (\ref{c}); we will review the argument here for the benefit of the reader. Taking an isomorphic copy of $\dot{\Q}$ if necessary, we can assume that $\dot{\Q}$ is forced to be an element of $H(\lambda)^{V[\R]}$. Let $j: V \to M$ be an elementary embedding\footnote{A weakly compact embedding suffices here.} with critical point $\lambda$ chosen so that \beq \label{triv} \mbox{$j(f^L)(\lambda)$ is the trivial forcing}\eeq and let $(\tilde{G}^0 \x \tilde{G}^1) * g$ be $j((R^0 \x R^1 ) *\dot{\Q})$-generic over $V$. In $V[(\tilde{G}^0 \x \tilde{G}^1) * g]$, $j$ lifts to \beq j: V[G] \to M[G^*] = M[G][H],\eeq where $G$ is $\R$-generic and $H$ is $j(\R)/G$-generic over $M[G]$. By our assumption on $\dot{\Q}$, $j(\dot{\Q}^G) = \dot{\Q}^G$ and therefore $j^{-1}{''}g = g$, and $j$ lifts to \beq \label{a:cof} j: V[G][g] \to M[G][H][g].\eeq By (\ref{eq:q}) and (\ref{triv}), there is a projection onto $j(\R)/G$ over $M[G]$ from the product $R^0_{\lambda+1} \x R^1_{\lambda+1}$, where $R^0_{\lambda+1}$ is equivalent to $\Add(\kappa,j(\lambda))$ and $R^1_{\lambda+1}$ is the  ($\kappa^+$-closed)$^{M[G]}$ term forcing. In $V[(\tilde{G}^0 \x \tilde{G}^1) * g]$, there is a generic filter $H^0 \x H^1$ for $R^0_{\lambda+1} \x R^1_{\lambda+1}$ over $M[G]$, and $g$ is $\dot{\Q}^G$-generic over $M[G][H^0][H^1]$, with \beq M[G][H] \sub M[G][H^0][H^1].\eeq Since $\dot{\Q}^G$ lives in $H(\lambda)^{V[G]}$ and this is equal to $H(\lambda)^{M[G]}$, it is meaningful to write $M[G][g]$. We will analyse the relationship between $M[G][g]$ and $M[G][H^0][H^1][g]$ and argue that a hypothetical $\lambda$-Aronszajn tree $T$ in $M[G][g]$ cannot get a cofinal branch in $M[G][H^0][H^1][g]$, which contradicts the fact that $T$ does have a cofinal branch in $M[G][H][g]$ by (\ref{a:cof}). 

For the analysis, notice that $H^0,H^1,g$ are mutually generic\footnote{If $\dot{\Q}^G$ is $\kappa^+$-cc, the mutual genericity of $H^1$ and $g$ follows by Easton's lemma as argued in \cite{UNGER:1}. In any case, we can ensure that the generics are mutually generic by taking a generic for the product.} because $H^0 \x H^1 \x g$ is generic for the product $R^0_{\lambda+1} \x R^1_{\lambda+1} \x \dot{\Q}^G$ over $M[G]$ (recall that all these three forcings live in $M[G]$). It follows that we can rearrange them and have $M[G][H^0][H^1][g] = M[G][g][H^0][H^1]$, which gives us the inclusion relation \beq \label{a:key} M[G][g] \sub M[G][g][H^0][H^1].\eeq

(\ref{a}) follows by a standard argument which shows that $H^0 \x H^1$ cannot add a cofinal branch over $M[G][g]$ (use Fact \ref{f:spencer} applied over $M[G]$ to the $\kappa^+$-cc forcing $\dot{\Q}^G \x R^0_{\lambda+1}$ and $\kappa^+$-closed forcing $R^1_{\lambda+1}$).

(\ref{c}) Since $\dot{\Q}^G$ does not add new $\kappa$-sequences, $R^1_{\lambda+1}$ is still $\kappa^+$-closed over $M[G][g]$. The argument is finished by first noting that by Fact \ref{Kunen}, $R^0_{\lambda+1}$ cannot add a cofinal branch to $T$ over $M[G][g]$ because it is $\kappa^+$-cc here ($\dot{\Q}^G$ does not add new $\kappa$-sequences, which implies that $R^0_{\lambda+1}$ is even $\kappa^+$-Knaster in $M[G][g]$). Then we apply Fact \ref{f:spencer} over $M[G][g]$ to the $\kappa^+$-cc forcing $R^0_{\lambda+1}$ and $\kappa^+$-closed forcing $R^1_{\lambda+1}$ and conclude there is no cofinal branch in $T$ in $M[G][g][H^0][H^1]$ as desired.\footnote{Fact \ref{f:spencer} is actually not required for this case because $R^0_{\lambda+1}$ is $\kappa^+$-Knaster and hence $H^0$ does not add cofinal branches to $T$ over $M[G][g][H^1]$ by an argument of Baumgartner in \cite{BAUM:iter}.}
\end{proof}

\section{Open questions}\label{sec:open}
We conclude the paper by open questions.

{\bf Q1.} The methods used to prove Theorem \ref{th:improve} left open the question whether the tree property at $\kappa^{++}$ can be indestructible under all $\kappa^{+}$-cc forcing notions $\Q$ living in $V[\M]$. The key ingredient of the proof was the analysis using a projection from a product which requires that $\dot{\Q}$ can be ``grouped'' with Cohen part $\Add(\kappa,\lambda)$ of the Mitchell forcing $\M$. Notice that the question is also open with stronger forms of $\kappa^{+}$-cc.

{\bf Q2.} More specifically, it is open whether, or to what extent, Theorem \ref{th:improve} can be generalized to Prikry-type forcing notions with collapses. Depending on the setup of the Prikry forcing with collapses, the forcing itself does not live in $V[\Add(\kappa,\lambda)]$, but typically only in $V[\M]$ (this is for instance the case in \cite{FHS2:large} where the guiding generic exists only in $V[\M]$). However, the properties of the Prikry forcing might allow an analysis not in general available for an arbitrary $\kappa^+$-cc forcing in $V[\M]$.

{\bf Q3.} On a more general note, it seems worth studying the possible extent of the class of forcing notions which do not add $\lambda$-Aronszajn trees with respect to different models and different cardinals $\lambda$ at which the tree property holds (for instance at $\lambda = \kappa^+$ for a strong limit singular $\kappa$).

\medskip

\no {\bf Acknowledgements.} Both authors were supported by FWF/GA{\v C}R grant \emph{Compactness principles and combinatorics} (19-29633L).


\end{document}